\documentclass[a4paper,11pt]{amsart}

\usepackage{amsmath,amsfonts,amssymb,amsthm,amscd}
\usepackage{graphicx}
\usepackage{perpage}
\usepackage{url}
\usepackage{color}
\usepackage[T1]{fontenc}
\usepackage{hyperref}
\usepackage[a4paper,scale={0.72,0.74},marginratio={1:1},footskip=7mm,headsep=10mm]{geometry}
\usepackage[latin9]{inputenc}
\usepackage{mathrsfs}
\usepackage{tikz}
\setcounter{secnumdepth}{2}

\frenchspacing

\numberwithin{equation}{section}

\newtheorem{theorem}{Theorem}[section]
\newtheorem{lemma}[theorem]{Lemma}

\newtheorem{corollary}[theorem]{Corollary}
\newtheorem{remark}[theorem]{Remark}
\newtheorem{definition}[theorem]{Definition}

\newtheorem{conjecture}[theorem]{Conjecture}


\numberwithin{equation}{section}


\newcommand{\R}{\mathbb{R}}

\newcommand{\Z}{\mathbb{Z}}
\newcommand{\N}{\mathbb{N}}

\newcommand{\PP}{\mathbb{P}}
\newcommand{\EE}{\mathbb{E}}

\newcommand{\dmin}{d_\mathrm{min}}
\newcommand{\dave}{d_\mathrm{ave}}

\newcommand{\cH}{\mathcal{H}}
\newcommand{\cP}{\mathcal{P}}
\newcommand{\cC}{\mathcal{C}}
\newcommand{\cL}{\mathcal{L}}

\newcommand{\cT}{\mathcal{T}}

\newcommand{\stab}{\mathrm{stab}}
\newcommand{\meta}{\mathrm{meta}}
\newcommand{\CM}{\mathrm{CM}}


\begin{document}

\title{Metastability for Glauber dynamics on random graphs}

\author{S.\ Dommers}
\address{Department of Mathematics, Ruhr University Bochum,
Universit\"atsstra\ss e 150, 44780 Bochum, Germany}
\email{sander.dommers@ruhr-uni-bochum.de}

\author{F.\ den Hollander}
\address{Mathematical Institute, Leiden University, P.O.\ Box 9512,
2300 RA Leiden, The Netherlands.}
\email{denholla@math.leidenuniv.nl}

\author{O.\ Jovanovski}
\address{Mathematical Institute, Leiden University, P.O.\ Box 9512,
2300 RA Leiden, The Netherlands.}
\email{o.jovanovski@math.leidenuniv.nl}

\author{F.R.\ Nardi}
\address{Eindhoven University of Technology, Department of Mathematics
and Computer Science, P.O.\ Box 513, 5600 MB Eindhoven,
The Netherlands.}
\email{f.r.nardi@tue.nl}

\begin{abstract}
In this paper we study metastable behaviour at low temperature of Glauber spin-flip
dynamics on random graphs. We fix a large number of vertices and randomly allocate
edges according to the Configuration Model with a prescribed degree distribution.
Each vertex carries a spin that can point either up or down. Each spin interacts
with a positive magnetic field, while spins at vertices that are connected by edges
also interact with each other via a ferromagnetic pair potential. We start from the
configuration where all spins point down, and allow spins to flip up or down
according to a Metropolis dynamics at positive temperature. We are interested in
the time it takes the system to reach the configuration where all spins point up. In
order to achieve this transition, the system needs to create a sufficiently large
droplet of up-spins, called critical droplet, which triggers the crossover.

In the limit as the temperature tends to zero, and subject to a certain \emph{key
hypothesis} implying metastable behaviour, the average crossover time
follows the classical \emph{Arrhenius law}, with an exponent and a prefactor
that are controlled by the \emph{energy} and the \emph{entropy} of the critical
droplet. The crossover time divided by its average is exponentially distributed.
We study the scaling behaviour of the exponent as the number of vertices tends
to infinity, deriving upper and lower bounds. We also identify a regime for the
magnetic field and the pair potential in which the key hypothesis is satisfied.
The critical droplets, representing the saddle points for the crossover, have a
size that is of the order of the number of vertices. This is because the random
graphs generated by the Configuration Model are expander graphs.
\end{abstract}

\keywords{Random graph, Glauber spin-flip dynamics, metastability, critical droplet,
Configuration Model.}
\subjclass[2010]{60C05; 60K35; 60K37; 82C27}
\thanks{The research in this paper was supported by NWO Gravitation Grant
024.002.003-NETWORKS. The work of SD was supported also by DFG Research
Training Group 2131.}

\date{\today}

\maketitle


\section{Introduction and main theorems}

A physical system is in a \emph{metastable state} when it remains locked for a very long
time in a phase that is different from the one corresponding to thermodynamic equilibrium.
The latter is referred to as the \emph{stable state}. Classical examples are supersaturated
vapours, supercooled liquids, and ferromagnets in the hysteresis loop. The main three objects
of interest for metastability are the transition time from the metastable state to the stable state,
the gate of configurations the system has to cross in order to achieve the transition, and the
tube of typical trajectories the system follows prior to and after the transition.

Metastability for interacting particle systems on \emph{lattices} has been studied intensively
in the past three decades. Various different approaches have been proposed. After initial
work by Cassandro, Galves, Olivieri and Vares~\cite{CGOV84}, Neves and
Schonmann~\cite{NS91}, \cite{NS92}, a powerful method -- known as the \emph{pathwise
approach} to metastability based on large deviation theory -- was developed in Olivieri and
Scoppola~\cite{OS95}, \cite{OS96}, Catoni and Cerf~\cite{CC97}, Manzo, Nardi, Olivieri
and Scoppola~\cite{MNOS04}, Cirillo and Nardi~\cite{CN13}, Cirillo, Nardi and Sohier~\cite{CNSo15}. 
This was successfully applied to low-temperature Ising and Blume-Capel models subject to 
Glauber spin-flip dynamics (in two and three dimensions, with isotropic, anisotropic and 
staggered interactions) in Koteck\'y and Olivieri~\cite{KO92}, \cite{KO93}, \cite{KO94}, Cirillo 
and Olivieri~\cite{CO96}, Ben Arous and Cerf~\cite{BAC96}, Nardi and Olivieri~\cite{NO96}. 
Later, another powerful method -- known as the \emph{potential-theoretic approach} to 
metastability based on the analogy between Markov processes and electric networks -- was 
developed in Bovier, Eckhoff, Gayrard and Klein~\cite{BEGK00}, \cite{BEGK01}, \cite{BEGK02}, 
\cite{BEGK04}. This was shown in Bovier and Manzo~\cite{BM02}, Bovier, den Hollander and 
Spitoni~\cite{BdHS10} to lead to a considerable sharpening of earlier results. For other approaches 
to metastability, as well as further examples of metastable stochastic dynamics and relevant 
literature, we refer the reader to the monographs by Olivieri and Vares~\cite{OV05}, Bovier 
and den Hollander~\cite{BdH15}.

Recently, there has been interest in the Ising model on \emph{random graphs} (Dembo and
Montanari~\cite{DM10}, Dommers, Giardin\`a and van der Hofstad~\cite{DGvdH10}, Mossel
and Sly~\cite{MS13}). The only results known to date about metastability subject to Glauber
spin-flip dynamics are valid for $r$-regular random graphs (Dommers~\cite{Dpr}). In the present
paper we investigate what can be said for more general degree distributions. Metastability
is much more challenging on random graphs than on lattices. Moreover, we need to capture
the metastable behaviour for a \emph{generic realisation} of the random graph.

In Section~\ref{IsingGlauber} we define the Ising model on a random multigraph subject to
Glauber spin-flip dynamics. We start from the configuration where all spins point down, and
allow spins to flip up or down according to a Metropolis dynamics at positive temperature.
We are interested in the time it takes the system to reach the configuration where all spins
point up. In Section~\ref{metastability} we introduce certain geometric quantities that play a
central role in the description of the metastable behaviour of the system, and state three
general theorems that are valid under a certain key hypothesis. These theorems concern the
average transition time, the distribution of the transition time, and the gate of saddle point
configurations for the crossover, all in the limit of low temperature. They involve certain key
quantities associated with the random graph. Our goal is to study the scaling behaviour of
these quantities as the size of the graph tends to infinity.

In Section~\ref{literature} we describe four examples to which the three general theorems
apply: three refer to regular lattices, while one refers to the Erd\H{o}s-R\'enyi random graph.
In Section~\ref{configurationmodel} we recall the definition of the Configuration Model, which
is an example of a random graph with a non-trivial geometric structure. In Section~\ref{maintheorems}
we state our main metastability results for the latter. In Section~\ref{discussion} we place these
results in their proper context and give an outline of the remainder of the paper.


\subsection{Ising model and Glauber dynamics}
\label{IsingGlauber}

Given a finite connected non-oriented multigraph $G=(V,E)$, let $\Omega=\{-1,+1\}^V$
be the set of configurations $\xi=\{\xi(v)\colon\,v \in V\}$ that assign to each vertex
$v\in V$ a spin-value $\xi(v)\in\{-1,+1\}$. Two configurations that will be of particular
interest to us are those where all spins point up, respectively, down:
\begin{equation}
\boxplus \equiv +1, \qquad \boxminus \equiv -1.
\end{equation}
For $\beta \geq 0$, playing the role of \emph{inverse temperature}, we define the Gibbs
measure
\begin{equation}
\mu_\beta(\xi) = \frac{1}{Z_\beta}\,e^{-\beta \cH(\xi)},
\qquad \xi\in\Omega,
\label{eq:Gibbs}
\end{equation}
where $\cH\colon\,\Omega\to\R$ is the \emph{Hamiltonian} that assigns an energy
to each configuration given by
\begin{equation}
\cH(\xi) = -\frac{J}{2} \sum_{(v,w) \in E} \xi(v)\xi(w)
-\frac{h}{2} \sum_{v \in V} \xi(v), \qquad \xi\in\Omega,
\label{eq:hamiltonian}
\end{equation}
with $J>0$ the \emph{ferromagnetic pair potential} and $h>0$ the \emph{magnetic field}.
The first sum in the right-hand side of \eqref{eq:hamiltonian} runs over all non-oriented
edges in $E$. Hence, if $v,w \in V$ have $k \in \N_0$ edges between them, then their joint
contribution to the energy is $-k\,\frac{J}{2}\,\xi(v)\xi(w)$.

We write $\xi\sim\zeta$ if and only if $\xi$ and $\zeta$ agree at all but one vertex. A
transition from $\xi$ to $\zeta$ corresponds to a flip of a single spin, and is referred
to as an \emph{allowed move}. Glauber spin-flip dynamics on $\Omega$ is the
continuous-time Markov process $(\xi_t)_{t \geq 0}$ defined by the transition rates
\begin{equation}
c_\beta(\xi,\zeta) = \begin{cases}
e^{-\beta[\cH(\zeta)-\cH(\xi)]_+}, &\xi\sim\zeta,\\
0, &\mbox{otherwise}.
\end{cases}
\end{equation}
The Gibbs measure in \eqref{eq:Gibbs} is the reversible equilibrium of this dynamics.
We write $P^{G,\beta}_\xi$ to denote the law of $(\xi_t)_{t \geq 0}$ given $\xi_0=\xi$,
$\cL^{G,\beta}$ to denote the associated generator, and $\lambda^{G,\beta}$ to
denote the principal eigenvalue of $\cL^{G,\beta}$. The upper indices $G,\beta$
exhibit the dependence on the underlying graph $G$ and the interaction strength
$\beta$ between neighbouring spins. For $A\subseteq\Omega$, we write
\begin{equation}
\tau_A = \inf\big\{t>0\colon\,\xi_t \in A,\,\exists\,0<s<t\colon\,\xi_s \neq \xi_0\big\}
\end{equation}
to denote the first hitting time of the set $A$ after the starting configuration is left.


\subsection{Metastability}
\label{metastability}

To describe the metastable behaviour of our dynamics we need the following
geometric definitions.

\begin{definition}
(a) The communication height between two distinct configurations $\xi,\zeta\in\Omega$ is
\begin{equation}
\Phi(\xi,\zeta) = \min_{\gamma\colon\,\xi\to\zeta} \max_{\sigma\in\gamma}
\cH(\sigma),
\end{equation}
where the minimum is taken over all paths $\gamma\colon\,\xi\to\zeta$ consisting of
allowed moves only. The communication height between two non-empty disjoint sets
$A,B\subset\Omega$ is
\begin{equation}
\Phi(A,B) = \min_{\xi\in A,\zeta\in B} \Phi(\xi,\zeta).
\end{equation}
(b) The stability level of $\xi\in\Omega$ is
\begin{equation}
V_\xi = \min_{ {\zeta\in\Omega:} \atop {\cH(\zeta)<\cH(\xi)} } \Phi(\xi,\zeta)-\cH(\xi).
\end{equation}
(c) The set of stable configurations is
\begin{equation}
\Omega_\stab = \left\{\xi\in\Omega\colon\,\cH(\xi)
= \min_{\zeta\in\Omega} \cH(\zeta)\right\}.
\end{equation}
(d) The set of metastable configurations is
\begin{equation}
\Omega_\meta = \left\{\xi\in\Omega\backslash\Omega_\stab\colon\,
V_\xi = \max_{\zeta\in\Omega\backslash\Omega_\stab} V_\zeta\right\}.
\end{equation}
\end{definition}

It is easy to check that $\Omega_\stab = \{\boxplus\}$ for all $G$ because $J,h>0$.
For general $G$, however, $\Omega_\meta$ is not a singleton, but we will be interested
in those $G$ for which the following \emph{hypothesis} is satisfied:

\medskip

\begin{itemize}
\item [(H)] $\Omega_\meta = \{\boxminus\}$.
\label{itm:Hhyp}
\end{itemize}

\medskip\noindent
The energy barrier between $\boxminus$ and $\boxplus$ is
\begin{equation}
\Gamma^\star = \Phi(\boxminus,\boxplus)-\cH(\boxminus).
\end{equation}

\begin{definition}
Let $(\cP^\star,\cC^\star)$ be the unique maximal subset of $\Omega\times\Omega$
with the following properties (see Fig.~{\rm \ref{fig-protocrcr}}):
\begin{enumerate}
\item
$\forall\,\xi\in\cP^\star\,\exists\,\xi\prime\in\cC^\star\colon\,\xi\sim\xi\prime$,\\
$\forall\,\xi\prime\in\cC^\star\,\exists\,\xi\in\cP^\star\colon\,\xi\prime\sim\xi$.
\item
$\forall\,\xi\in\cP^\star\colon\,\Phi(\xi,\boxminus)<\Phi(\xi,\boxplus)$.
\item
$\forall\xi\,\in\cC^\star\,\exists\,\gamma\colon\,\xi\to\boxplus\colon\,\\
{\rm (i)} \max_{\zeta\in\gamma} \cH(\zeta)-\cH(\boxminus) \leq \Gamma^\star$.\\
{\rm (ii)} $\gamma \cap\{\zeta\in\Omega\colon\,\Phi(\zeta,\boxminus)
<\Phi(\zeta,\boxplus)\} = \emptyset$.
\end{enumerate}
\end{definition}

\begin{figure}[htbp]
\vspace{-0.5cm}
\begin{center}
\setlength{\unitlength}{0.3cm}
\begin{picture}(15,15)(0,-1)
{\thicklines
\qbezier(0,0)(0,5)(0,10)
\qbezier(7,0)(7,5)(7,10)
}
\put(-2,11){$\cP^\star$}
\put(5,11){$\cC^\star$}
\put(0.4,5.8){$\xi$}
\put(5.8,7.8){$\xi\prime$}
\put(-5,1.5){$\boxminus$}
\put(10.6,-2){$\boxplus$}
\put(-8.6,4.8){$<\Gamma^\star+\cH(\boxminus)$}
\put(8.5,6){$\leq \Gamma^\star+\cH(\boxminus)$}
\put(0,5){\circle*{.45}}
\put(7,7.2){\circle*{.45}}
\put(-4.5,3){\circle*{.45}}
\put(11,-.5){\circle*{.45}}
\put(0,5){\vector(3,1){6.8}}
\put(7,7.2){\vector(1,-2){3.5}}
\put(0,5){\vector(-2,-1){3.5}}
\end{picture}
\end{center}
\caption{\small Schematic picture of the protocritical set $\cP^\star$ and the critical
set $\cC^\star$.}
\label{fig-protocrcr}
\end{figure}

\noindent
Think of $\cP^\star$ as the set of configurations where the dynamics, on its
way from $\boxminus$ to $\boxplus$, is `almost at the top', and of $\cC^\star$
as the set of configurations where it is `at the top and capable over crossing
over'. We refer to $\cP^{\star}$ as the \emph{protocritical set} and to $\cC^\star$
as the \emph{critical set}. Uniqueness follows from the observation that if
$(\cP_1^\star,\cC_1^\star)$ and $(\cP_2^\star,\cC_2^\star)$ both satisfy
conditions (1)--(3), then so does $(\cP_1^\star\cup\cP_2^\star,\cC_1^\star
\cup\cC_2^\star)$. Note that
\begin{equation}
\begin{array}{lll}
&\cH(\xi)<\Gamma^\star+\cH(\boxminus) &\forall\,\xi \in \cP^\star,\\[0.1cm]
&\cH(\xi)=\Gamma^\star+\cH(\boxminus) &\forall\,\xi \in \cC^\star.
\end{array}
\end{equation}

It is shown in Bovier and den Hollander \cite[Chapter 16]{BdH15} that \emph{subject
to hypothesis} (H) the following three theorems hold.

\begin{theorem}
\label{thm:critdrop}
$\lim_{\beta\to\infty} P^{G,\beta}_\boxminus(\tau_{\cC^\star}<\tau_\boxplus
\mid \tau_\boxplus < \tau_\boxminus)=1$.
\end{theorem}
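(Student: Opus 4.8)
This statement is the gate property quoted from \cite[Chapter 16]{BdH15} and holds for any fixed finite reversible Markov chain, so the plan is to reconstruct its short proof, isolating a purely geometric lemma and wrapping it in a capacity comparison. Throughout $G$ is fixed, so $|V|$, $|\Omega|=2^{|V|}$ and the lengths of paths in $\Omega$ are constants while $\beta\to\infty$; I will use hypothesis (H) only through the consequence $\Gamma^\star>0$. Write $\mathcal A=\{\zeta\in\Omega\colon\,\Phi(\zeta,\boxminus)<\Phi(\zeta,\boxplus)\}$ for the metastable valley, with the convention $\Phi(\xi,\xi)=\cH(\xi)$; then $\boxminus\in\mathcal A$ (since $\Gamma^\star>0$) and $\boxplus\notin\mathcal A$ (since $\Omega_\stab=\{\boxplus\}$ gives $\Phi(\boxplus,\boxminus)>\cH(\boxplus)$). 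As $\Phi(\boxminus,\boxplus)=\Gamma^\star+\cH(\boxminus)$, every path $\gamma\colon\boxminus\to\boxplus$ has $\max_{\sigma\in\gamma}\cH(\sigma)\ge\Gamma^\star+\cH(\boxminus)$, and the lemma to establish is: \emph{every path $\gamma\colon\boxminus\to\boxplus$ with $\max_{\sigma\in\gamma}\cH(\sigma)\le\Gamma^\star+\cH(\boxminus)$ meets $\cC^\star$.}

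I would prove the lemma by a ``last exit from $\mathcal A$'' argument. Write $\gamma=(\sigma_0,\dots,\sigma_n)$, $\sigma_0=\boxminus$, $\sigma_n=\boxplus$, and let $j$ be the largest index with $\sigma_j\in\mathcal A$ (it exists because $\boxminus\in\mathcal A$, and $j<n$ because $\boxplus\notin\mathcal A$). Then the single-edge pair $(\{\sigma_j\},\{\sigma_{j+1}\})$ satisfies conditions (1)--(3) of the definition of $(\cP^\star,\cC^\star)$: (1) holds since $\sigma_j\sim\sigma_{j+1}$; (2) holds since $\sigma_j\in\mathcal A$; and (3) holds with the auxiliary path taken to be the terminal segment $(\sigma_{j+1},\dots,\sigma_n)$ of $\gamma$, which runs from $\sigma_{j+1}$ to $\boxplus$, has all energies $\le\Gamma^\star+\cH(\boxminus)$, and avoids $\mathcal A$ by maximality of $j$. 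Since $(\cP^\star,\cC^\star)$ is the union of all pairs obeying (1)--(3), this forces $\sigma_j\in\cP^\star$ and $\sigma_{j+1}\in\cC^\star$, so $\gamma$ meets $\cC^\star$ (and in particular $\cC^\star\neq\emptyset$).

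For the probabilistic wrapper, put $B=\{\tau_{\cC^\star}>\tau_\boxplus\}\cap\{\tau_\boxplus<\tau_\boxminus\}$, so that the probability in the theorem equals $P^{G,\beta}_\boxminus(B)/P^{G,\beta}_\boxminus(\tau_\boxplus<\tau_\boxminus)$. On $B$, the trajectory read at its jump times up to $\tau_\boxplus$ is a sequence of allowed moves from $\boxminus$ to $\boxplus$ avoiding $\cC^\star$ (using $\boxplus\notin\cC^\star$, as configurations in $\cC^\star$ have energy $\Gamma^\star+\cH(\boxminus)>\cH(\boxplus)$); by the lemma it exceeds energy $\Gamma^\star+\cH(\boxminus)$, and since $\cH$ takes finitely many values there is a $\beta$-independent $\delta_0>0$ such that the trajectory visits $D:=\{\zeta\colon\cH(\zeta)\ge\Gamma^\star+\cH(\boxminus)+\delta_0\}$ before $\tau_\boxplus$, hence before $\tau_\boxminus$. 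Therefore $B\subseteq\{\tau_D<\tau_\boxminus\}$, and, writing $\mathrm{cap}$ for the capacity of the reversible chain $(\xi_t)$ with respect to $\mu_\beta$ and using $\mathrm{cap}(\boxminus,A)=\mu_\beta(\boxminus)\,r(\boxminus)\,P^{G,\beta}_\boxminus(\tau_A<\tau_\boxminus)$ (the escape rate $r(\boxminus)$ cancelling),
\[
\frac{P^{G,\beta}_\boxminus(B)}{P^{G,\beta}_\boxminus(\tau_\boxplus<\tau_\boxminus)}\ \le\ \frac{\mathrm{cap}(\boxminus,D)}{\mathrm{cap}(\boxminus,\boxplus)}\ \le\ C\,e^{-\beta\delta_0},
\]
where the last bound comes from a standard Dirichlet-form upper bound for $\mathrm{cap}(\boxminus,D)$ (test function $\mathbf 1_{\Omega\setminus D}$) and a series lower bound for $\mathrm{cap}(\boxminus,\boxplus)$ along a self-avoiding optimal path, with $C$ a combinatorial constant depending only on $G$. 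Letting $\beta\to\infty$ gives the claim.

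The only delicate ingredient is the well-definedness (existence and maximality) of $(\cP^\star,\cC^\star)$, which the excerpt supplies, together with the observation that the last edge of an optimal path leaving $\mathcal A$ already witnesses conditions (1)--(3); after that the geometric lemma is immediate and the capacity estimates are textbook for a fixed finite reversible chain, so I expect no serious obstacle — consistent with the fact that the paper cites \cite[Chapter 16]{BdH15} rather than giving a proof.
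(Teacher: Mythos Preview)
Your proposal is correct and matches the approach of \cite[Chapter~16]{BdH15}, which the paper simply cites without reproducing a proof; the ``last exit from $\mathcal A$'' argument pinning down a $(\cP^\star,\cC^\star)$-pair, followed by the capacity ratio estimate, is exactly the standard route. One small wording slip: you write that the probability in the theorem \emph{equals} $P^{G,\beta}_\boxminus(B)/P^{G,\beta}_\boxminus(\tau_\boxplus<\tau_\boxminus)$, but this ratio is the \emph{complement} of that probability; the rest of your argument correctly shows it is $O(e^{-\beta\delta_0})$, which is what is needed.
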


\begin{theorem}
\label{thm:nucltime}
There exists a $K^\star \in (0,\infty)$ such that
\begin{equation}
\lim_{\beta\to\infty}
e^{-\beta\Gamma^\star}\,E^{G,\beta}_\boxminus(\tau_\boxplus) = K^\star.
\end{equation}
\end{theorem}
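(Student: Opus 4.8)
The plan is to use the potential-theoretic approach to metastability. Since $(\xi_t)_{t\geq0}$ is reversible with respect to $\mu_\beta$ and $\{\boxminus\}$ is a single configuration, the classical mean-hitting-time formula for reversible Markov chains applies directly:
\begin{equation}
E^{G,\beta}_\boxminus(\tau_\boxplus)
= \frac{1}{\mathrm{cap}_\beta(\boxminus,\boxplus)}\,
\sum_{\xi\in\Omega}\mu_\beta(\xi)\,h_{\boxminus,\boxplus}(\xi),
\label{eq:meanhit}
\end{equation}
where $h_{\boxminus,\boxplus}$ is the equilibrium potential, i.e.\ the $\cL^{G,\beta}$-harmonic function on $\Omega\setminus\{\boxminus,\boxplus\}$ with $h_{\boxminus,\boxplus}(\boxminus)=1$, $h_{\boxminus,\boxplus}(\boxplus)=0$, and $\mathrm{cap}_\beta(\boxminus,\boxplus)=\tfrac12\sum_{\xi\sim\zeta}\mu_\beta(\xi)\,c_\beta(\xi,\zeta)\,[h_{\boxminus,\boxplus}(\xi)-h_{\boxminus,\boxplus}(\zeta)]^2$ is the associated Dirichlet-form capacity. (The first-hitting time in \eqref{eq:meanhit} coincides with $\tau_\boxplus$ because the chain leaves $\boxminus\neq\boxplus$ immediately.) I would then estimate numerator and denominator separately as $\beta\to\infty$, divide, and read off $K^\star$.

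For the numerator I would invoke the valley-mass estimate of Bovier and den Hollander \cite[Chapter~16]{BdH15}: under (H),
\begin{equation}
\sum_{\xi\in\Omega}\mu_\beta(\xi)\,h_{\boxminus,\boxplus}(\xi)
= \mu_\beta(\boxminus)\,\bigl[1+o(1)\bigr],\qquad\beta\to\infty.
\label{eq:num}
\end{equation}
Heuristically, $h_{\boxminus,\boxplus}(\xi)=P^{G,\beta}_\xi(\tau_\boxminus<\tau_\boxplus)$ equals $1-o(1)$ on the metastable valley of $\boxminus$ and $o(1)$ elsewhere; since $\boxminus$ is a strict local minimum and the unique minimum of $\cH$ inside its valley (if it had a neighbour of smaller energy one would get $V_\boxminus=0$, excluded because (H) forces $\Gamma^\star>0$), that valley contributes $\mu_\beta(\boxminus)[1+o(1)]$, while the configurations on which $h_{\boxminus,\boxplus}$ is bounded away from $0$ and $1$ sit near the communication level $\Phi(\boxminus,\boxplus)$ and therefore carry total $\mu_\beta$-mass of order $e^{-\beta\Gamma^\star}\mu_\beta(\boxminus)=o(\mu_\beta(\boxminus))$.

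For the denominator I would establish
\begin{equation}
\mathrm{cap}_\beta(\boxminus,\boxplus)
= N\,\mu_\beta(\boxminus)\,e^{-\beta\Gamma^\star}\,\bigl[1+o(1)\bigr],\qquad\beta\to\infty,
\label{eq:cap}
\end{equation}
for a constant $N=N(G,J,h)\in(0,\infty)$ governed by the local geometry of the protocritical and critical sets $\cP^\star,\cC^\star$. The upper bound on the capacity comes from the Dirichlet variational principle, testing with an appropriate function equal to $1$ on the valley of $\boxminus$ and $0$ elsewhere (interpolated across the saddle), so that only the allowed moves joining $\cP^\star$ to $\cC^\star$ contribute to leading order, each with weight $\mu_\beta(\boxminus)\,e^{-\beta\Gamma^\star}[1+o(1)]$ by reversibility. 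The matching lower bound comes from the dual Thomson (Berman--Konsowa) flow principle, by exhibiting a unit flow from $\boxminus$ to $\boxplus$ supported on minimal-energy paths through $\cC^\star$ and controlling its resistive energy; here $N$ appears as the effective conductance of the network near the saddle, which is positive (there is at least one minimal-energy path) and finite ($G$ has finitely many vertices). Combining \eqref{eq:meanhit}, \eqref{eq:num} and \eqref{eq:cap} gives $e^{-\beta\Gamma^\star}E^{G,\beta}_\boxminus(\tau_\boxplus)=\tfrac1N[1+o(1)]$, so $K^\star=1/N\in(0,\infty)$.

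The main obstacle is the sharp capacity asymptotics \eqref{eq:cap}: matching the Dirichlet upper bound and the flow lower bound to leading order requires understanding the full set of minimal-energy paths $\boxminus\to\boxplus$ and the structure of $(\cP^\star,\cC^\star)$, which on a general multigraph need not form a simple cut in configuration space. For the present statement, however, one needs only the existence of a positive finite limit, not the value of $K^\star$, so it suffices to know that the optimal test function and the optimal flow yield, after normalisation by $\mu_\beta(\boxminus)e^{-\beta\Gamma^\star}$, the same limit — an abstract fact established in full generality in \cite[Chapter~16]{BdH15} under hypothesis (H). The same machinery, applied to the pair $(\boxminus,\boxplus)$, also yields Theorem~\ref{thm:critdrop} and the asymptotic exponentiality of $\tau_\boxplus/E^{G,\beta}_\boxminus(\tau_\boxplus)$.
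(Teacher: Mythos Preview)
Your proposal is correct and aligns with what the paper invokes: the paper does not give an independent proof of this theorem but simply cites \cite[Chapter~16]{BdH15}, and the potential-theoretic argument you sketch (mean-hitting-time formula, valley-mass estimate for the numerator, Dirichlet/Thomson matching bounds for the capacity) is precisely the machinery developed there under hypothesis~(H). So there is nothing to compare---you have reconstructed the content of the reference the paper defers to.
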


\begin{theorem}
\label{thm:explaw}
(a) $\lim_{\beta\to\infty} \lambda^{G,\beta}_\beta\,E^{G,\beta}_\boxminus(\tau_\boxplus)=1$.\\
(b) $\lim_{\beta\to\infty} P^{G,\beta}_\boxminus(\tau_\boxplus/
E^{G,\beta}_\boxminus(\tau_\boxplus)>t) = e^{-t}$ for all $t \geq 0$.
\end{theorem}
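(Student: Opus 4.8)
The plan is to derive Theorem~\ref{thm:explaw} from the potential-theoretic machinery for reversible metastable dynamics, taking Theorems~\ref{thm:critdrop}--\ref{thm:nucltime} and hypothesis~(H) as the only inputs (the route of Bovier and den Hollander~\cite{BdH15}). I would start from an \emph{a priori gap on stability levels}: since $\Omega_\stab=\{\boxplus\}$, $\Omega_\meta=\{\boxminus\}$, and $\cH$ takes finitely many values on the finite set $\Omega$, one first checks (using (H) together with $\Omega_\stab=\{\boxplus\}$) that $V_\boxminus=\Gamma^\star$, and then that there is a $\delta>0$ with $V_\xi\le\Gamma^\star-\delta$ for every $\xi\in\Omega\setminus\{\boxminus,\boxplus\}$. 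Iterating the classical escape estimate (from a configuration of stability level $\ell$ the process reaches strictly lower energy on the time scale $e^{\beta\ell}$) over configurations of decreasing energy then yields the \emph{separation of time scales}: from any starting configuration the dynamics hits $\{\boxminus,\boxplus\}$ within a time $O(e^{\beta(\Gamma^\star-\delta/2)})$ with probability $1-o(1)$, which is exponentially smaller than $E^{G,\beta}_\boxminus(\tau_\boxplus)\asymp e^{\beta\Gamma^\star}$ from Theorem~\ref{thm:nucltime}.

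For part~(a) I would write $\lambda_\beta:=\lambda^{G,\beta}_\beta$ for the principal eigenvalue of $-\cL^{G,\beta}$ with Dirichlet boundary condition at $\boxplus$, and let $\nu_\beta$ be the associated quasi-stationary distribution on $\Omega\setminus\{\boxplus\}$. Then $E_{\nu_\beta}(\tau_\boxplus)=1/\lambda_\beta$ exactly, while the potential-theoretic identity behind Theorem~\ref{thm:nucltime} gives $E^{G,\beta}_\boxminus(\tau_\boxplus)=\mathrm{cap}_\beta(\boxminus,\boxplus)^{-1}\sum_{\xi\in\Omega}\mu_\beta(\xi)\,h_{\boxminus,\boxplus}(\xi)$, with $h_{\boxminus,\boxplus}$ the equilibrium potential ($1$ at $\boxminus$, $0$ at $\boxplus$). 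Using the a priori gap, $h_{\boxminus,\boxplus}$ is $1+o(1)$ on the metastable well $W$ of $\boxminus$ and $o(1)$ off it, and $\mu_\beta$ concentrates on $\boxminus$ (its energy minimum inside $W$), so $\sum_\xi\mu_\beta(\xi)h_{\boxminus,\boxplus}(\xi)=\mu_\beta(W)(1+o(1))$; a matching Dirichlet-principle estimate $\lambda_\beta=\mathrm{cap}_\beta(\boxminus,\boxplus)/\mu_\beta(W)\,(1+o(1))$ then gives $\lambda_\beta\,E^{G,\beta}_\boxminus(\tau_\boxplus)\to1$.

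For part~(b) I would use that under $P_{\nu_\beta}$ the time $\tau_\boxplus$ is \emph{exactly} $\mathrm{Exp}(\lambda_\beta)$, and show that, started from $\boxminus$ and conditioned on $\{\tau_\boxplus>t\}$, the law of $\xi_t$ relaxes to $\nu_\beta$ on a time scale $o(1/\lambda_\beta)$; equivalently, that starting from $\boxminus$ and starting from $\nu_\beta$ give asymptotically the same law for $\tau_\boxplus$, because both put almost all their mass in $W$, on which the principal eigenfunction of $-\cL^{G,\beta}$ is essentially constant. This loss of memory gives $P^{G,\beta}_\boxminus(\tau_\boxplus>s+t)=P^{G,\beta}_\boxminus(\tau_\boxplus>s)\,P_{\nu_\beta}(\tau_\boxplus>t)\,(1+o(1))$, hence $P^{G,\beta}_\boxminus(\tau_\boxplus>t)=e^{-\lambda_\beta t}(1+o(1))$, and combined with part~(a) this is the claimed $e^{-t}$ limit. (Theorem~\ref{thm:critdrop} is not used here; it is the companion statement identifying $\cC^\star$ as the gate.)

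The hard part will be everything in the first paragraph: deducing the a priori gap $V_\xi\le\Gamma^\star-\delta$ from hypothesis~(H) --- in particular identifying $V_\boxminus$ with $\Gamma^\star$ and producing a \emph{uniform} $\delta>0$ --- and converting it into the separation-of-time-scales estimate with errors that are uniform across all the quantities above. Once that is in place, parts~(a) and~(b) are a fairly mechanical combination of the capacity formula, the Dirichlet (Thomson) variational principle, and a renewal argument; the remaining care is to ensure that every $1+o(1)$ correction is of size $e^{-\beta\delta'}$ for some fixed $\delta'>0$, so that it survives multiplication by the exponentially large $E^{G,\beta}_\boxminus(\tau_\boxplus)$.
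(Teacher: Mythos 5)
The paper does not prove Theorem~\ref{thm:explaw} itself: it is quoted verbatim from Bovier and den Hollander~\cite{BdH15}, Chapter~16, as a consequence of hypothesis~(H). Your sketch is a faithful high-level reconstruction of that reference's potential-theoretic proof (the nondegeneracy gap $V_\xi\le\Gamma^\star-\delta$ for $\xi\notin\{\boxminus,\boxplus\}$, the quasi-stationary distribution together with the capacity formula and Dirichlet principle for part~(a), and the renewal/loss-of-memory argument for part~(b)), so it follows essentially the same route as the cited source.
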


\noindent
The proofs of Theorems~\ref{thm:critdrop}--\ref{thm:explaw} in \cite{BdH15} do not rely on the
details of the graph $G$, provided it is finite, connected and non-oriented (i.e., allowed
moves are possible in both directions). For concrete choices of $G$, the task is to verify
hypothesis (H) and to identify the triple (see Fig.~\ref{fig-abstractdoublewell})
\begin{equation}
\label{eq:triple}
\big(\cC^\star,\Gamma^\star,K^\star\big).
\end{equation}
For lattice graphs this task has been carried out successfully (even for several classes
of dynamics: see \cite[Chapters 17--18]{BdH15}). For random graphs, however, the triplet
in \eqref{eq:triple} is random, and describing it represents a \emph{very serious challenge}.
In what follows we focus on a particular class of random graphs called the \emph{Configuration
Model}. But before doing so, we first summarise what is known in the literature.

\begin{figure}[htbp]
\vspace{2cm}
\begin{center}
\setlength{\unitlength}{0.4cm}
\begin{picture}(8,6)(0,0)
\put(0,0){\line(11,0){11}}
\put(0,0){\line(0,9){9}}
\qbezier[30](3.3,3.8)(3.3,1.9)(3.3,0)
\qbezier[50](4.8,5.6)(4.8,3)(4.8,0)
\qbezier[25](6.8,3.0)(6.8,1.5)(6.8,0)
\qbezier[40](0,5.8)(2.5,5.8)(4.8,5.8)
{\thicklines
\qbezier(2,8)(3,2)(4,5)
\qbezier(4,5)(5,7)(6,4)
\qbezier(6,4)(7,2)(8,5)
}
\put(2.2,-1.3){$\Omega_\mathrm{meta}$}
\put(4.55,-.9){$\cC^\star$}
\put(5.8,-1.3){$\Omega_\mathrm{stab}$}
\put(11.5,-.3){$\xi$}
\put(-1.3,9.5){$\cH(\xi)$}
\put(-1.4,5.6){$\Gamma^\star$}
\put(3.3,4){\circle*{.35}}
\put(4.8,5.75){\circle*{.35}}
\put(6.8,3.20){\circle*{.35}}
\end{picture}
\end{center}
\vspace{0.3cm}
\caption{\small Schematic picture of $\cH$, $\Omega_\mathrm{meta}$,
$\Omega_\mathrm{stab}$ and $\cC^\star$.}
\label{fig-abstractdoublewell}
\end{figure}


\subsection{Examples of applications}
\label{literature}


\subsubsection{Torus}
If the underlying graph is a torus, then the computations needed to identify the
critical set $\cC^\star$ and the prefactor $K^\star$ simplify considerably. As shown
in Bovier and den Hollander~\cite[Chapter 17]{BdH15}, for Glauber dynamics on
a finite box $\Lambda\subset\Z^2$ (wrapped around to form a torus), the set
$\cC^\star$ consists of all $\ell_c \times (\ell_c -1)$ quasi-squares (located
anywhere in $\Lambda$ in any of the two orientations) with an extra vertex
attached to one of its longest sides, where $\ell_c = \lceil \tfrac{2J}{h}\rceil$ (the
upper integer part of $\tfrac{2J}{h}$). Hypothesis (H) has been verified, and the
exponent and the prefactor equal
\begin{equation}
\Gamma^\star = J(4\ell_c)-h(\ell_c(\ell_c-1)+1), \qquad
K^\star = \frac{1}{|\Lambda|}\,\frac{1}{\frac{4}{3}(2\ell_c-1)}.
\end{equation}
Metastable behaviour occurs if and only if $\ell_c \in (1,\infty)$, and for reasons of
parity it is assumed that $\tfrac{2J}{h} \notin \N$. Similar results apply for a torus in
$\Z^3$.


\subsubsection{Hypercube}
For Glauber dynamics on the $n$-dimensional hypercube, Jovanovski~\cite{Jpr} gives
a complete description of the set $\cC^\star$ (under the assumption that $\tfrac{h}{J}
\neq\frac{a}{b}$ for some $a\in \mathbb{N}$ and $b\in \left\lbrace 1,2,\ldots,2^{n}\right\rbrace$)
and shows that
\begin{equation}
\Gamma_n^\star = \tfrac{1}{3}\left(1-\tfrac{h}{J}+\left\lceil \tfrac{h}{J}\right\rceil \right)
\left(2^{\left\lceil n-\tfrac{h}{J}\right\rceil }-4+2\epsilon\right)-\epsilon, \qquad
K_n^\star = \frac{\left\lceil \tfrac{h}{J}\right\rceil !}{n!\,2^{n-4}\left(3-\epsilon\right)},
\end{equation}
with $\epsilon=\left\lceil n-h\right\rceil \mbox{mod }2$. Hypothesis (H) has been verified.


\subsubsection{Complete graph}
For Glauber dynamics on the complete graph $K_n$, it is easy to see that any
monotone path from $\boxminus$ to $\boxplus$ is an optimal path. It is straightforward
to show that $\cC^\star=\{U\subseteq V\colon\,|U|=n^\star\}$ with $n^\star = \lceil\frac{1}{2}
(n-1-\tfrac{h}{J})\rceil$, whenever $\frac{h}{J}$ is not an integer, and to compute
\begin{equation}
\label{GKexp}
\Gamma_n^\star=n^\star(J(n-n^\star)-h), \qquad  K_n^\star=\frac{1}{|\cC^\star|}
\frac{n}{n-n^\star}.
\end{equation}
Metastable behaviour occurs for any value of $h$ and $J$, provided $n$ is large
enough. Hypothesis (H) is also easy to confirm by observing that every configuration
lies on some optimal path. Like the hypercube, $K_n$ is an expander graph and
consequently the communication height $\Gamma^\star$ grows at least linearly with
the number of vertices (quadratically for $K_n$).

We can reduce the quadratic growth by introducing an interaction parameter that is
inversely proportional to the size of the graph: e.g.\ $J=\tfrac{J'}{n}$ for some
constant $J'>0$, with $h>0$ fixed. It follows that
\begin{equation}
\Gamma_{n}^\star=n^\star\left(J'\left(\frac{n-n^\star}{n}-h\right)\right),
\end{equation}
where this time $n^\star=\lceil\frac{n}{2}(1-\tfrac{h}{J'})-\frac{1}{2}\rceil$, and $K_n^\star$
is the same as in \eqref{GKexp}. Metastable behaviour occurs if and only if $\tfrac{h}{J'}
<1-\frac{1}{n}$.


\subsubsection{Erd\H{o}s-R\'enyi random graph}
Sharp results of the above type become infeasible when the graph is random. The
Erd\H{o}s-R\'enyi random graph is the result of performing bond percolation on
the complete graph, and is a toy model of a graph with a random geometry. Let
$\mathrm{ER}_n(p)$ denotes the resulting random graph on $n$ vertices with
percolation parameter $p=f(n)/n$ for some $f(n)$ satisfying $\lim_{n\to\infty} f(n)
=\infty$, the so-called \emph{dense} Erd\H{o}s-R\'enyi random graph. Then,
as shown in the appendix, metastable behaviour occurs for any $h,J>0$, and
\begin{equation}
\lim_{n\to\infty} \frac{\Gamma_n^\star}{\tfrac14 Jnf(n)} = 1 \quad
\text{ in distribution under the law of }  \mathrm{ER}_n(f(n)/n),
\label{eq:er1}
\end{equation}
which is accurate up to leading order. The computation of $\cC_n^\star$ and $K_n^\star$,
however, is a \emph{formidable task}. The reason for this is that, while \eqref{eq:er1}
allows for a small error in the energy, the set $\cC_n^\star$ is made up of configurations
that have \emph{exactly} the critical energy $\Gamma_n^\star$.

When $f(n)=\lambda$ for some constant $\lambda>1$, the \emph{sparse case}, an
analysis similar to the one carried out in this paper can be used to obtain lower and
upper bounds on the communication height. However, we have been unable to prove
a convergence of the form in \eqref{eq:er1}.


\subsection{Configuration Model}
\label{configurationmodel}

In this section we recall the construction of the random \emph{multi-graph} known as the
\emph{Configuration Model} (illustrated in Fig.~\ref{fig:CM}). We refer to van der
Hofstad~\cite[Chapter 7]{vdHpr} for further details.

\begin{figure}[htbp]
\begin{center}
\includegraphics[scale = 0.10]{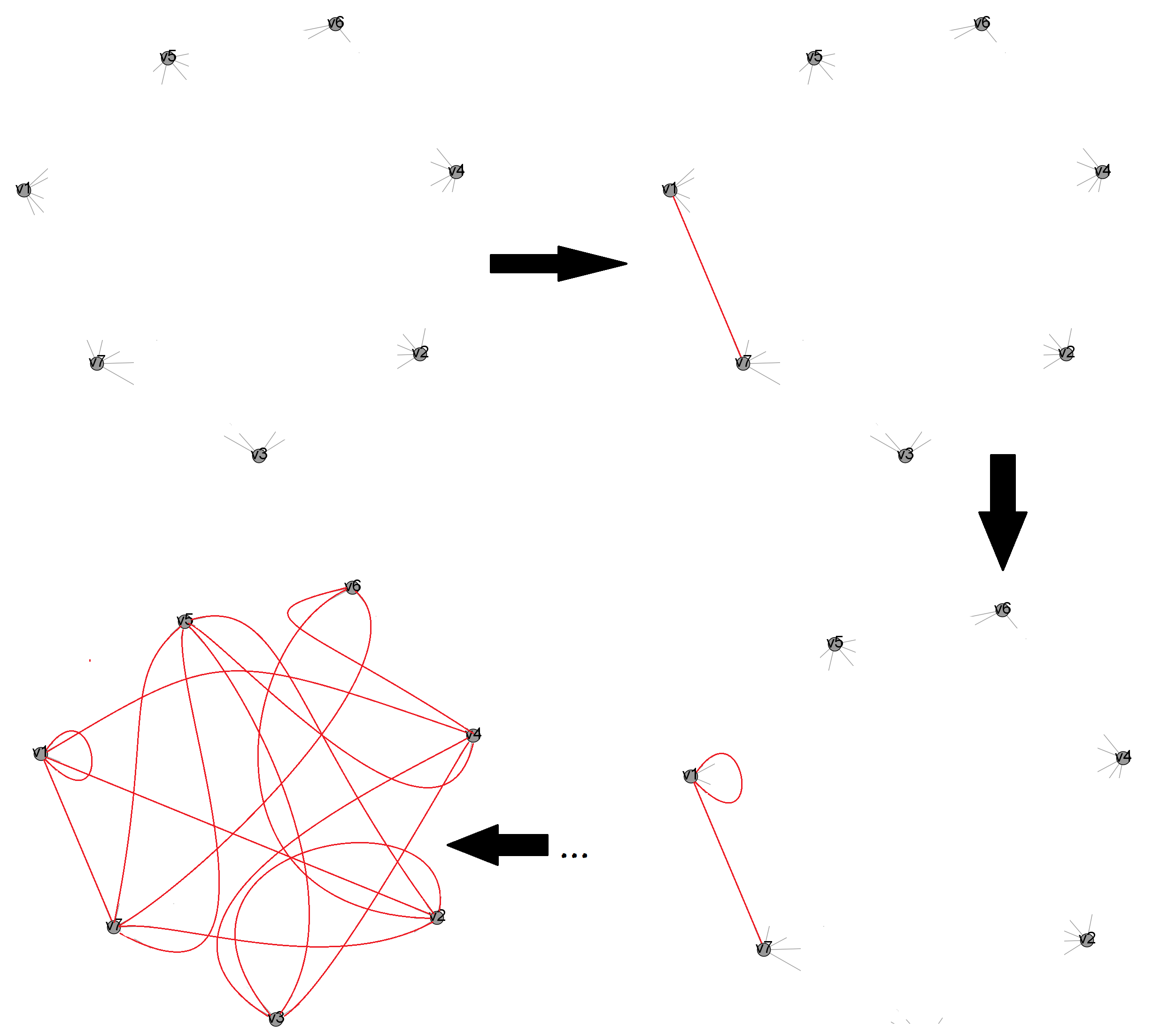}
\end{center}
\caption{\small
Illustration of the construction of $\CM_n$. Three steps in the matching of stubs for
$n=7$ and degree sequence $(5,5,4,5,5,3,5)$.}
\label{fig:CM}
\end{figure}

Fix $n\in\N$, and let $V=\{v_1,\ldots,v_n\}$. With each vertex $v_i$ we associate a
\emph{random degree} $D_i$, in such a way that $D_1,\ldots,D_n \in \N$ are i.i.d.\
with marginal probability distribution $f$ conditional on the event $\{\sum_{i=1}^n D_i
= \mbox{even}\}$. Consider a uniform matching of the elements in the set of
\emph{stubs} (also called half-edges), written
\begin{equation}
\{x_{i,j}\}_{1 \leq i \leq n,1 \leq j \leq D_i}.
\end{equation}
By erasing the second label of the stubs, we can associate with it a
\emph{multi-graph} $\CM_n$ satisfying the requirement that the degree of $v_i$ is
$D_i$ for $1 \leq i \leq n$. The total number of edges is $\tfrac12\sum_{i=1}^n D_i$.

Throughout the sequel we use the symbol $\PP_n$ to denote the law of the random
multi-graph $\CM_n$ on $n$ vertices generated by the Configuration Model. To avoid
degeneracies we assume that
\begin{equation}
\dmin = \min\{k\in\N\colon\,f(k)>0\} \geq 3,
\qquad \dave = \sum_{k\in\N} kf(k) < \infty,
\end{equation}
i.e., all degrees are at least three and the average degree is finite. In this case the graph
is connected \emph{with high probability} (w.h.p.), i.e., with a probability tending to $1$ as
$n\to\infty$ (see van der Hofstad~\cite{vdHpr}).

\subsection{Main theorems}
\label{maintheorems}

We are interested in proving hypothesis (H) and identifying the key quantities
in \eqref{eq:triple} for $G = \CM_n$, which we henceforth denote by
$(\cC^\star_n,\Gamma^\star_n,K^\star_n)$, in the limit as $n\to\infty$.

Our first main theorem settles hypothesis (H) for small magnetic field.

\begin{theorem}
\label{thm:hyp}
Suppose that the inequality in equation \eqref{eq:Hcond1} holds. Then
\begin{equation}
\lim_{n\to\infty} \PP_n\big(\CM_n \text{ satisfies {\rm (H)}}\big) = 1.
\end{equation}
\end{theorem}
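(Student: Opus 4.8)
The plan is to reduce hypothesis (H) to two quantitative statements about the energy landscape of $\cH$ on $G=\CM_n$ and then establish each by an isoperimetric estimate valid $\PP_n$-a.s.\ as $n\to\infty$. Since $\Omega_\stab=\{\boxplus\}$, (H) is equivalent to saying that $V_\xi<V_\boxminus$ for every $\xi\in\Omega\setminus\{\boxminus,\boxplus\}$. Writing $U_\xi=\{v\in V\colon\xi(v)=+1\}$ for the up-set of $\xi$, and $e(U,U^c)$ for the number of edges between $U$ and its complement, we have the identity
\begin{equation}
\cH(\xi)-\cH(\boxminus)=J\,e(U_\xi,U_\xi^c)-h\,|U_\xi|,
\end{equation}
so the whole problem is governed by the edge-isoperimetry of $\CM_n$. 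Hypothesis (H) will follow once we show, w.h.p.: (i) $V_\boxminus\geq c_1 n$ for some $c_1=c_1(f,h,J)>0$; and (ii) $V_\xi\leq c_2$ for a constant $c_2=c_2(f,h,J)$ for every $\xi\neq\boxminus,\boxplus$ --- indeed $V_\xi=0$ for all but finitely many $\xi$.

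For (i) the input is that $\CM_n$ is, w.h.p., an expander: there is $\phi=\phi(f)>0$ with $e(U,U^c)\geq\phi\min(|U|,n-|U|)$ for all $U\subseteq V$, which is the standard edge-expansion of the Configuration Model with $\dmin\geq 3$. Inequality \eqref{eq:Hcond1} guarantees $h<J\phi$. Hence any $\zeta$ with $\cH(\zeta)<\cH(\boxminus)$ satisfies $J\,e(U_\zeta,U_\zeta^c)<h|U_\zeta|$, which together with expansion forces $|U_\zeta|>n/2$. Every path from $\boxminus$ to such a $\zeta$ must pass through a configuration $\sigma$ with $|U_\sigma|=\lceil n/2\rceil$, and
\begin{equation}
\cH(\sigma)-\cH(\boxminus)\geq J\phi\lfloor n/2\rfloor-h\lceil n/2\rceil\geq\tfrac12(J\phi-h)n-h .
\end{equation}
Therefore $\Phi(\boxminus,\zeta)-\cH(\boxminus)\geq\tfrac12(J\phi-h)n-h$ for all such $\zeta$, and hence $V_\boxminus\geq\tfrac12(J\phi-h)n-h=\Theta(n)$.

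For (ii) note first that any $\xi$ which is not a local minimum of $\cH$ has a neighbour of strictly lower energy, so $V_\xi=0$. A short computation with the single-flip energy differences --- using \eqref{eq:Hcond1} to keep these differences away from degenerate integer values --- shows that $\xi$ is a local minimum if and only if $U_\xi$ is \emph{majority-stable}: every vertex of $U_\xi$ has at least half of its incident edges inside $U_\xi$, and every vertex of $U_\xi^c$ has a strict majority of its incident edges inside $U_\xi^c$. Since $\dmin\geq 3$, such a set satisfies $e(U_\xi)\geq|U_\xi|$ and $e(U_\xi^c)\geq|U_\xi^c|$, where $e(\cdot)$ denotes the number of internal edges; a first-moment computation over all vertex subsets, using the local structure of $\CM_n$, then shows that the expected number of majority-stable sets $U$ with $C\leq\min(|U|,n-|U|)$ tends to $0$ once the constant $C$ is large enough. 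Hence w.h.p.\ every local minimum $\xi\neq\boxminus,\boxplus$ has $\min(|U_\xi|,n-|U_\xi|)\leq C$. For such $\xi$ one reaches $\boxminus$ (if $|U_\xi|\leq C$) or $\boxplus$ (if $|U_\xi^c|\leq C$) by flipping its at most $C$ minority spins one at a time; along this path the energy never exceeds $\cH(\xi)+J\,d_{\max}(\CM_n)\,C$, while $\cH(\boxminus)<\cH(\xi)$ (from expansion and \eqref{eq:Hcond1}), respectively $\cH(\boxplus)<\cH(\xi)$ (since $h>0$). Thus $V_\xi\leq J\,d_{\max}(\CM_n)\,C=o(n)$ w.h.p., which together with (i) yields (H).

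The main obstacle is the first-moment estimate inside (ii): ruling out linear-sized --- and more generally all super-constant-sized --- majority-stable sets in $\CM_n$, equivalently showing that the two constant configurations are essentially the only ``ferromagnetic'' local minima, must be carried out for a \emph{general} degree distribution $f$ with $\dmin\geq 3$. This requires an upper bound, uniform in the set size, on the probability that a prescribed vertex set has almost all of its stubs matched internally under the Configuration Model, and this bound must beat the entropy factor $\binom{n}{k}$; one also has to control the (possibly slowly diverging) maximum degree $d_{\max}(\CM_n)$ when passing from ``small local minimum'' to ``$V_\xi=o(n)$''. The expansion bound needed for (i) is more routine but likewise demands some care for unbounded-degree $f$.
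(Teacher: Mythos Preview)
Your step (i) is sound and is essentially the content of Theorem~\ref{thm:Lbound}: the isoperimetric bound forces every path out of $\boxminus$ to cross an energy barrier of order $n$, so $V_\boxminus\geq\Gamma_n^-=\Theta(n)$.

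The gap is in step (ii). The structural claim that w.h.p.\ every local minimum $\xi\neq\boxminus,\boxplus$ has $\min(|U_\xi|,n-|U_\xi|)\leq C$ for a \emph{constant} $C$ is false already for the $3$-regular Configuration Model: any induced cycle $C_k$ is majority-stable in your sense (each cycle vertex has two of its three neighbours on the cycle, each off-cycle neighbour has two of its three neighbours off the cycle), so for $h<J$ the configuration with $U_\xi=C_k$ is a genuine local minimum of $\cH$. Since random $3$-regular graphs have girth $\Theta(\log n)$ and contain induced cycles of every length up to this scale, local minima of size $\Theta(\log n)$ exist w.h.p. More generally, unions of such cycles and other locally dense substructures produce local minima whose sizes you have not controlled. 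Even if one aims only for the weaker statement that no majority-stable set has size in $[\varepsilon n,(1-\varepsilon)n]$, the first-moment bound you allude to must beat the entropy factor $\binom{n}{\alpha n}$ for \emph{every} $\alpha\in[\varepsilon,1-\varepsilon]$, and the constraint ``each vertex has at least half its stubs matched internally'' is a delicate vertex-by-vertex condition under the Configuration Model pairing; you have not shown that this computation closes, and for small $\dmin$ it is far from obvious that it does.

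The paper avoids this classification entirely. Rather than arguing that non-trivial local minima are small, it shows directly that for every $\sigma$ with $\ell_\sigma\leq\tfrac12\ell_n$ one can build a path $\sigma=\sigma_0,\sigma_1,\ldots$ to a configuration of strictly lower energy along which $\cH(\sigma_t)-\cH(\sigma)<\Gamma^\star$ throughout. The path is greedy: at each step remove the vertex $v\in\sigma_{t-1}$ minimising $|E(v,\sigma_{t-1}\setminus v)|-|E(v,\overline{\sigma_{t-1}})|$. The aggregate isoperimetric bound $|E(\sigma,\overline{\sigma})|\geq I_{\dmin}(x)\,\ell_n$ guarantees that at every step some vertex has at least a fraction $I_{\dmin}(x)/x$ of its stubs pointing outward, which controls the cumulative energy increase along the path; the resulting bound is then compared to the lower bound $\Gamma_n^-$ on $\Gamma^\star$, and condition \eqref{eq:Hcond1} is exactly what makes this comparison go through. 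This argument tolerates local minima of linear size --- it only needs their depth to be below $\Gamma^\star$ --- and so is strictly more robust than the route you propose.
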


Our second and third main theorem provide upper and lower bounds on $\Gamma^\star_n$.
Label the vertices of the graph so that their degrees satisfy $d_{1}\leq\ldots\leq d_{n}$.
Let $\gamma\colon\,\boxminus\to\boxplus$ be the path that successively flips the
vertices $v_{1},\ldots,v_{n}$ (in that order), and let $\ell_{m}=\sum_{i=1}^{m}d_{i}$.

\begin{theorem}
\label{thm:Ubound}
Define
\begin{equation}\label{eq-defmbar}
\bar{m} = \min\left\{1 \leq m \leq n\colon\,\ell_m\left(1-\frac{\ell_m}{\ell_n}\right)
\geq \ell_{m+1}\left(1-\frac{\ell_{m+1}}{\ell_n}\right)-\frac{h}{J}\right\} < \frac{n}{2}.
\end{equation}
Then, w.h.p.,
\begin{equation}
\Gamma_n^{\star} \leq \Gamma_n^+,
\qquad \Gamma_n^+ = J\ell_{\bar{m}}\Big(1-\frac{\ell_{\bar{m}}}{\ell_n}\Big)
-h\bar{m} \pm O\big(\ell_n^{3/4}\big).
\end{equation}
\end{theorem}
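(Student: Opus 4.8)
The plan is to establish the bound by exhibiting the explicit reference path $\gamma$ from the statement and controlling its communication height. Since $\Gamma_n^\star=\Phi(\boxminus,\boxplus)-\cH(\boxminus)\le\max_{\sigma\in\gamma}\cH(\sigma)-\cH(\boxminus)$, it suffices to bound the energy of the configurations visited by $\gamma$. Write $\xi_m$ for the configuration with up‑spins precisely on $U_m:=\{v_1,\dots,v_m\}$, and let $\partial U_m$ denote the set of edges of $\CM_n$ with exactly one endpoint in $U_m$. A direct computation from \eqref{eq:hamiltonian} gives the exact identity
\begin{equation*}
\cH(\xi_m)-\cH(\boxminus)=J\,|\partial U_m|-hm ,
\end{equation*}
because flipping $U_m$ up changes the magnetic term by $-hm$ and reverses the sign of the pair interaction exactly across the edges in $\partial U_m$. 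Hence $\Gamma_n^\star\le\max_{0\le m\le n}\big(J\,|\partial U_m|-hm\big)$, and the problem splits into controlling the random variable $|\partial U_m|$ and locating the maximum over $m$.

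For the first part I would condition on the degree sequence and use that in the Configuration Model $|\partial U_m|$ is a function of the uniform perfect matching of the $\ell_n$ stubs: each of the $\ell_m$ stubs attached to $U_m$ is matched outside $U_m$ with probability $(\ell_n-\ell_m)/(\ell_n-1)$, so $\EE_n\!\big[|\partial U_m|\,\big|\,(d_i)_i\big]=\ell_m(\ell_n-\ell_m)/(\ell_n-1)=\ell_m(1-\ell_m/\ell_n)+O(1)$. Revealing the matching one stub‑pair at a time produces a Doob martingale whose increments are bounded (a single re‑matching alters $|\partial U_m|$ by at most a constant), so Azuma's inequality gives $\big|\,|\partial U_m|-\ell_m(1-\ell_m/\ell_n)\big|=O(\ell_n^{1/2}\log n)$ with probability $1-o(n^{-1})$, and a union bound over $m$ makes this hold for all $m$ simultaneously, w.h.p. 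Together with the standard w.h.p.\ regularity of the degree sequence (concentration of the empirical law, hence of $\ell_n=\dave\,n(1+o(1))$ and its partial sums, plus control of the upper tail), this reduces the claim to the deterministic estimate $\max_{0\le m\le n}F_m\le F_{\bar m}+O(\ell_n^{3/4})$, where $F_m:=J\ell_m(1-\ell_m/\ell_n)-hm$, the error $O(\ell_n^{1/2}\log n)$ being absorbed into $O(\ell_n^{3/4})$.

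For the second part, the defining property of $\bar m$ is precisely that $F_{m+1}-F_m>0$ for all $m<\bar m$ while $F_{\bar m+1}-F_{\bar m}\le 0$, so $F$ is strictly increasing on $\{0,\dots,\bar m\}$ and it remains to show $F_m\le F_{\bar m}+O(\ell_n^{3/4})$ for $m>\bar m$. Using the identity $\ell_a(1-\ell_a/\ell_n)-\ell_b(1-\ell_b/\ell_n)=(\ell_a-\ell_b)\big(1-(\ell_a+\ell_b)/\ell_n\big)$ one has
\begin{equation*}
\begin{split}
F_m-F_{\bar m}&=J(\ell_m-\ell_{\bar m})\Big(1-\tfrac{\ell_m+\ell_{\bar m}}{\ell_n}\Big)-h(m-\bar m)\\
&=J(\ell_m-\ell_{\bar m})\Big(1-\tfrac{2\ell_{\bar m}}{\ell_n}\Big)-\tfrac{J(\ell_m-\ell_{\bar m})^2}{\ell_n}-h(m-\bar m).
\end{split}
\end{equation*}
If $\ell_m-\ell_{\bar m}\le\ell_n^{3/4}$ this is at most $J(\ell_m-\ell_{\bar m})\le J\ell_n^{3/4}$, already within the required precision. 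If $\ell_m-\ell_{\bar m}>\ell_n^{3/4}$, one plays the first‑local‑maximum inequality at $\bar m$ — which, since $\ell_{\bar m}<\ell_n/2$ thanks to $\bar m<n/2$, bounds $J(1-2\ell_{\bar m}/\ell_n)$ by roughly $h/d_{\bar m+1}$ — against the negative curvature term $-J(\ell_m-\ell_{\bar m})^2/\ell_n$ and the negative drift $-h(m-\bar m)$; once $\ell_m$ passes the turning value $\ell_n/2$ the factor $1-2\ell_{\bar m}/\ell_n$ becomes irrelevant because $F$ is then manifestly decreasing, so it is enough to compare $F_m$ with $F_{m^\star}$ at the index $m^\star$ where $\ell_{m^\star}\approx\ell_n/2$.

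The step I expect to be the main obstacle is this last one. The function $F$ need not be unimodal on $\{0,\dots,n\}$: a sufficiently large jump in the sorted degree sequence past $\bar m$ can make $F_{m+1}-F_m$ positive again, so ruling out a second, higher peak — uniformly over a generic realisation of $\CM_n$ — requires genuinely exploiting the regularity of the empirical degree distribution and the finiteness of $\dave$ to bound how fast the partial sums $\ell_m$ can accelerate relative to the curvature available in $\ell_m(1-\ell_m/\ell_n)$, in combination with the hypothesis $\bar m<n/2$ that keeps $\ell_{\bar m}$ strictly below the turning value. By contrast, the martingale concentration of the cut sizes and the energy identity are essentially routine.
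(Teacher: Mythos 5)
Your approach matches the paper's architecture step by step: the same reference path (flipping vertices in non‑decreasing order of degree), the same energy identity $\cH(\xi_m)-\cH(\boxminus)=J|\partial U_m|-hm$, concentration of the boundary sizes around $\ell_m(1-\ell_m/\ell_n)$, and then an optimization over $m$ to single out $\bar m$. The only genuine technical divergence is in how the concentration is obtained. The paper builds $\CM_n$ via a dynamical one‑stub‑at‑a‑time construction (Section~\ref{CMdynamic}), computes first and second moments of the cross‑edge counts $\bar z_{x,t}$, applies Markov's inequality on a grid of $O(\ell_n^{1-\alpha})$ values of $x$, and interpolates between grid points; the exponent $\alpha=\tfrac34$ balances the two error sources and yields the $O(\ell_n^{3/4})$ precision. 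Your Doob/Azuma martingale on the sequential stub exposure is a legitimate alternative; with the standard swap‑coupling argument it does give bounded increments of size $O(1)$ and hence $O(\sqrt{\ell_n}\log n)$ concentration uniformly in $m$, which is absorbed into $O(\ell_n^{3/4})$. So that side of your proof is sound and arguably cleaner than the paper's.

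The place where you correctly predict trouble is the last step, showing $\max_m F_m\le F_{\bar m}+O(\ell_n^{3/4})$, and here I should tell you that the paper does not actually close that gap. The paper's proof disposes of it with the one‑line claim that $g(x)=Jx(1-x)-h(x)$ ``has at most one maximum for $x\in[0,1]$ if $x\mapsto h(x)$ is non‑decreasing,'' i.e.\ that the first local maximum of $F_m$ is automatically its global maximum. This is false for a general non‑decreasing $h$: one can make $g$ have an early, low interior local maximum followed by a larger one (e.g.\ $h$ rising steeply on $[0,x_0]$ so that $g$ peaks there, then flattening so that $Jx(1-x)$ dominates and $g$ climbs higher near $x=\tfrac12$). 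Concretely, with a bimodal degree law such as $f=\tfrac12\delta_3+\tfrac12\delta_D$ for large $D$ and with $3J<h<DJ$, the condition defining $\bar m$ is satisfied already at $m=1$ (so $\bar m=1<n/2$), while the path $\gamma$ climbs to a second, much higher hump once the degree‑$D$ vertices start to flip; for this realisation $\bar m$ does not locate the saddle of $\gamma$ at all. Your instinct that one must exploit more than ``$h$ non‑decreasing'' --- some quantitative control on how abruptly the sorted partial sums $\ell_m$ can accelerate past $\bar m$, perhaps together with the assumed parameter regime --- is exactly right, and none of it is supplied in the paper. So the ``main obstacle'' you flag is a genuine unresolved point; you should not expect to find its resolution in the original argument, and closing it would require an additional hypothesis (something like $h<(d_{\min}-1)J$, or a regularity condition on $f$ strong enough to make $x\mapsto h(x)$ effectively linear at the relevant scale) or a different reference path.
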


For $0<x\leq\tfrac12$ and $\delta>1$, define (see Fig.~\ref{fig:functionI})
\begin{equation}
\begin{aligned}
I_{\delta}\left(x\right) &= \inf\Big\{0<y\leq x\colon\\
&1<x{}^{x\left(1-1/\delta\right)}
\left(1-x\right)^{\left(1-x\right)\left(1-1/\delta\right)}
\left(1-x-y\right)^{-\left(1-x-y\right)/2}\left(x-y\right)^{-\left(x-y\right)/2}y^{-y}\Big\}.
\end{aligned}
\label{eq:functionI}
\end{equation}

\begin{figure}[htbp]
\begin{center}
\begin{tikzpicture}[thick,scale=1, every node/.style={scale=1}]
\draw[->] (0,0) -- (5,0) node[anchor=north] {$x$};
\draw[->] (0,0) -- (0,4) node[anchor=east] {$I_{\delta}$};
\draw	(0,0) node[anchor=north] {0}		
		(2,0) node[anchor=north] {$\tfrac{1}{4}$}	
		(4,0) node[anchor=north] {$\tfrac{1}{2}$};
\draw	(0,1.4) node[anchor=east] {$\tfrac{11}{250}$}	
		(0,2.8) node[anchor=east] {$\tfrac{11}{125}$};		
\draw [black] plot [smooth] coordinates {(0,0) (1/5,30*0.0111) (2/5,30*0.0205) (3/5,30*0.0286)
(4/5,30*0.0361) (5/5,30*0.0429) (6/5,30*0.0490) (7/5,30*0.0546) (8/5,30*0.0596) (9/5,30*0.0641)
(10/5,30*0.0683) (11/5,30*0.0721) (12/5,30*0.0753)  (13/5,30*0.0780) (14/5,30*0.0805)
(15/5,30*0.0825) (16/5,30*0.0844) (17/5,30*0.0854) (18/5,30*0.0864) (19/5,30*0.0869)
(20/5,30*0.0870)};
\end{tikzpicture}
\end{center}
\caption{\small Plot of the function $I_{\delta}(x)$ for $\delta=6$.}
\label{fig:functionI}
\end{figure}
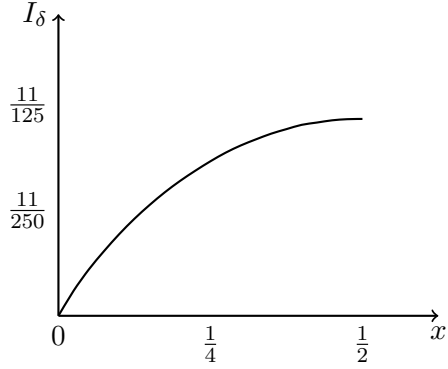

\begin{theorem}
\label{thm:Lbound}
Define
\begin{equation}\label{eq-defmtilde}
\tilde{m} = \min\left\lbrace 1 \leq m \leq n\colon\, \ell_m \geq \tfrac12\ell_n\right\rbrace.
\end{equation}
Then, w.h.p.,
\begin{equation}
\Gamma_n^\star \geq \Gamma_n^-, \qquad
\Gamma_n^- = J\,d_\mathrm{ave}\,I_{d_\mathrm{ave}}
\left(\tfrac12\right) n - h\tilde{m} - o(n).
\label{eq:lboundGamma}
\end{equation}
\end{theorem}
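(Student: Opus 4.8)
The plan is to bound $\Gamma_n^\star = \Phi(\boxminus,\boxplus) - \cH(\boxminus)$ from below by showing that \emph{every} path $\gamma\colon\boxminus\to\boxplus$ must pass through a configuration of high energy. Since the dynamics flips one spin at a time, along any such path there is a first time the number of up-spins (equivalently, the total up-degree) crosses the level $\tfrac12\ell_n$; at that moment the set $U$ of up-vertices has $\ell(U) := \sum_{v\in U} d_v$ close to $\tfrac12\ell_n$, more precisely in $[\tfrac12\ell_n, \tfrac12\ell_n + d_{\max}]$ (or we can take $|U|$ near $\tilde m$). The energy of such a configuration is $\cH(\xi_U) = \cH(\boxminus) + J\, e(U, U^c) - h|U|$, where $e(U,U^c)$ is the number of edges of $\CM_n$ between $U$ and its complement. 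Thus it suffices to prove a w.h.p. lower bound of the form $\min_{U\colon \ell(U)\approx \frac12\ell_n} e(U,U^c) \geq d_\mathrm{ave}\, I_{d_\mathrm{ave}}(\tfrac12)\, n - o(n)$, and then the magnetic-field term contributes exactly $-h\tilde m + O(d_{\max})$, absorbed into the $o(n)$ error once one checks $d_{\max} = o(n)$ w.h.p. under the finite-mean assumption (it is in fact $O(n^{\varepsilon})$ or smaller for light tails, but $o(n)$ suffices and follows from $\dave<\infty$).

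The heart of the argument is the edge-isoperimetric lower bound for $\CM_n$, which I would prove by a first-moment (union bound) computation over the randomness $\PP_n$ of the matching. Fix a target up-degree $\ell \approx \tfrac12\ell_n$ and a candidate vertex set $U$ with $\ell(U) = \ell$; write $a = \ell/\ell_n \approx \tfrac12$ for the fraction of stubs attached to $U$. Condition on the degree sequence. The number of matchings of the $\ell_n$ stubs that produce \emph{at most} $k$ cross-edges between the $\ell$ stubs of $U$ and the $\ell_n - \ell$ stubs of $U^c$ can be estimated by choosing which $k$ stubs on each side are the cross-stubs, pairing them across, and pairing the rest internally; dividing by the total number of matchings $(\ell_n - 1)!!$ and using Stirling gives, to exponential order,
\begin{equation}
\PP_n\big(e(U,U^c) \leq k\big) \;\leq\; \exp\!\Big[\ell_n\Big( a\log a + (1-a)\log(1-a) \;-\; \tfrac{1-a-y}{2}\log(1-a-y) - \tfrac{a-y}{2}\log(a-y) - y\log y + o(1)\Big)\Big],
\end{equation}
with $y = k/\ell_n$. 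A union bound over the at most $2^n \leq \exp[(\ell_n/\dave)\log 2 \cdot (1+o(1))]$ choices of $U$ then succeeds precisely when the bracketed exponent, \emph{after} adding the entropy term $(1-1/\delta)[a\log a + (1-a)\log(1-a)]$ coming from the number of degree-sequences / vertex subsets (here $\delta = \dave$ governs $\ell_n \approx \dave\, n$, so $2^n = (2^{1/\dave})^{\ell_n}$), is strictly negative — which is exactly the condition "$1 < (\,\cdots)$" defining $I_\delta(x)$ in \eqref{eq:functionI} failing, i.e. $y < I_\delta(a)$. Taking $a\to\tfrac12$ and $k = \ell_n\, I_\delta(\tfrac12) - o(\ell_n) = \dave\, I_\dave(\tfrac12)\, n - o(n)$ yields $\PP_n(\exists U\colon \ell(U)\approx\tfrac12\ell_n,\ e(U,U^c)\leq k) \to 0$, which is the desired w.h.p. statement.

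Two technical points need care and I expect the union-bound exponent matching to be the main obstacle. First, the sum over $U$ must be organized by $\ell(U)$ rather than $|U|$, and one must argue that restricting attention to $\ell(U)$ in a window of width $O(d_{\max})$ around $\tfrac12\ell_n$ costs only a subexponential factor — here one uses that any path crosses $\tfrac12\ell_n$ and that $d_{\max} = o(n)$, so the single "crossing configuration" lies in this window; the infimum over the window of the isoperimetric bound is continuous in $a$ at $a = \tfrac12$, giving the clean value $I_\dave(\tfrac12)$. Second, one must handle the conditioning on $\{\sum D_i \text{ even}\}$ and the concentration $\ell_n / n \to \dave$ (law of large numbers), together with replacing the random number of subsets $\binom{n}{|U|}$ by its exponential rate; the delicate part is that the entropy gain from choosing $U$ is $2^n$-scale while the matching-cost is $\ell_n$-scale, and the ratio $\ell_n/n \to \dave$ is what makes $\delta = \dave$ the right parameter inside $I_\delta$. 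Everything else — Stirling, the reduction from $\Gamma_n^\star$ to the crossing configuration, bounding the $h$-term — is routine.
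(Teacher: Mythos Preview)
Your proposal is correct and follows essentially the same route as the paper: a first-moment (union-bound) estimate over all vertex sets $U$ with $\ell(U)\approx\tfrac12\ell_n$, combining the Stirling asymptotics for the number of matchings with a prescribed cut size and the crude entropy bound $|\{U:\ell_U=x\ell_n\}|\le 2^n=\exp[(\ell_n/\dave)\log 2\,(1+o(1))]$, which is exactly how $\delta=\dave$ enters $I_\delta$ and yields $e(U,U^c)\ge I_{\dave}(\tfrac12)\ell_n$ w.h.p. The paper's own proof is in fact terser than yours---it simply sets $x=\tfrac12$ in its earlier inequality \eqref{sd-expbound}, bounds $\eta(\tfrac12)\le 2^{1/\dave}$, and reads off the exponential decay for $y<I_{\dave}(\tfrac12)$---whereas you spell out the crossing-configuration argument and the $O(d_{\max})$ window; these extra details are sound but not needed beyond what is absorbed in the $o(n)$ term.
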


\begin{corollary}
Under hypothesis {\rm (H)} (or the weaker version of {\rm (H)} introduced in Section~\ref{althyp}),
Theorems~{\rm \ref{thm:Ubound}--\ref{thm:Lbound}} yield the following bounds on the crossover
time (see Dommers~\cite[Proposition~2.4]{Dpr}):
\begin{equation}
\label{timesandwich}
\lim_{\beta\to\infty} P_{\boxminus}^{G,\beta}\left(e^{\Gamma_n^- -\varepsilon}
\leq \tau_\boxplus \leq e^{\Gamma_n^+ +\varepsilon}\right) =1.
\end{equation}
\end{corollary}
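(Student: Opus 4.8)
The plan is to split the two-sided estimate \eqref{timesandwich} (where $\tau_\boxplus$ is understood to be compared with $e^{\beta\Gamma_n^-}$ and $e^{\beta\Gamma_n^+}$, i.e. $\beta$ sits in the exponents) into an upper-tail bound $\lim_{\beta\to\infty}P^{G,\beta}_\boxminus(\tau_\boxplus > e^{\beta(\Gamma_n^+ +\varepsilon)}) = 0$ and a lower-tail bound $\lim_{\beta\to\infty}P^{G,\beta}_\boxminus(\tau_\boxplus < e^{\beta(\Gamma_n^- -\varepsilon)}) = 0$. Everything is carried out on the event $\cE_n$ that $\CM_n$ satisfies {\rm (H)} and $\Gamma_n^- \le \Gamma_n^\star \le \Gamma_n^+$; by Theorems~\ref{thm:hyp}, \ref{thm:Ubound} and \ref{thm:Lbound} this event has $\PP_n$-probability tending to $1$, and on it the trio of Theorems~\ref{thm:critdrop}--\ref{thm:explaw} applies with $(\cC^\star,\Gamma^\star,K^\star)=(\cC_n^\star,\Gamma_n^\star,K_n^\star)$. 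Since $\cE_n$ is $\beta$-independent, it suffices to prove the two tail bounds for an arbitrary fixed realisation $G\in\cE_n$; the statement of the corollary is then read as ``on an event of $\PP_n$-probability $\to1$, the displayed $\beta\to\infty$ limit holds''.

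For the upper tail, Theorem~\ref{thm:nucltime} gives $e^{-\beta\Gamma_n^\star}E^{G,\beta}_\boxminus(\tau_\boxplus)\to K_n^\star\in(0,\infty)$, hence $\tfrac1\beta\log E^{G,\beta}_\boxminus(\tau_\boxplus)\to\Gamma_n^\star\le\Gamma_n^+$, so that $E^{G,\beta}_\boxminus(\tau_\boxplus)\le e^{\beta(\Gamma_n^+ +\varepsilon/2)}$ for all large $\beta$. Markov's inequality then yields $P^{G,\beta}_\boxminus(\tau_\boxplus > e^{\beta(\Gamma_n^+ +\varepsilon)})\le e^{-\beta\varepsilon/2}\to0$. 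For the lower tail, the same convergence (now using $\Gamma_n^\star\ge\Gamma_n^-$) gives $E^{G,\beta}_\boxminus(\tau_\boxplus)\ge e^{\beta(\Gamma_n^- -\varepsilon/2)}$ for large $\beta$, whence
\[
P^{G,\beta}_\boxminus\big(\tau_\boxplus < e^{\beta(\Gamma_n^- -\varepsilon)}\big)
\;\le\; P^{G,\beta}_\boxminus\big(\tau_\boxplus / E^{G,\beta}_\boxminus(\tau_\boxplus) < e^{-\beta\varepsilon/2}\big).
\]
By Theorem~\ref{thm:explaw}(b), for every fixed $t_0>0$ one has $\limsup_{\beta\to\infty}P^{G,\beta}_\boxminus(\tau_\boxplus / E^{G,\beta}_\boxminus(\tau_\boxplus) < t_0) = 1-e^{-t_0}$; since $e^{-\beta\varepsilon/2}\le t_0$ eventually, the right-hand side above is eventually bounded by that quantity, and letting $t_0\downarrow0$ forces it to $0$. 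Combining the two tail bounds gives \eqref{timesandwich}.

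Under the weaker form of {\rm (H)} from Section~\ref{althyp}, Theorems~\ref{thm:nucltime}--\ref{thm:explaw} need not hold with their sharp prefactors, but the argument above only uses the coarser facts that $\Gamma_n^\star - o_\beta(1)\le\tfrac1\beta\log E^{G,\beta}_\boxminus(\tau_\boxplus)\le\Gamma_n^\star + o_\beta(1)$ as $\beta\to\infty$ and that $\boxplus$ cannot be reached prematurely, i.e. $P^{G,\beta}_\boxminus(\tau_\boxplus < e^{\beta(\Gamma_n^\star-\delta)})\to0$ for every $\delta>0$; these are precisely the ingredients isolated in Dommers~\cite[Proposition~2.4]{Dpr}, into which one substitutes $\Gamma_n^-\le\Gamma_n^\star\le\Gamma_n^+$ exactly as above. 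The only delicate points are thus bookkeeping: disentangling the graph randomness (handled once by conditioning on $\cE_n$, whose probability is $\beta$-independent) from the dynamical limit $\beta\to\infty$, and absorbing the non-uniformity of the exponential-law convergence through the $t_0\downarrow0$ sandwich. I therefore do not expect a genuine obstacle in the corollary itself; the substantive work is carried out earlier, in Theorems~\ref{thm:Ubound} and \ref{thm:Lbound} and in verifying {\rm (H)}.
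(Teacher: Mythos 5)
The paper itself gives no argument for this corollary: it simply cites Dommers~\cite[Proposition~2.4]{Dpr}, and in Section~\ref{althyp} it points to Dommers~\cite[Lemma~5.3]{Dpr} as the ingredient that makes the weaker hypothesis suffice. So there is no ``paper proof'' to compare against, only a pointer to the literature. Against that standard, your reconstruction is correct and self-contained for the case in which full hypothesis (H) holds. The separation of the graph randomness from the $\beta$-limit by conditioning on a $\beta$-independent event $\cE_n$ of $\PP_n$-probability $\to 1$ is exactly the right bookkeeping, and both tails are handled by legitimate deductions from Theorems~\ref{thm:nucltime} and~\ref{thm:explaw}: Markov's inequality for the upper tail, and the $t_0\downarrow 0$ sandwich through the exponential law for the lower tail (a small nit: Theorem~\ref{thm:explaw}(b) gives an honest limit, so you may write $\lim$ rather than $\limsup$).

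The one genuine loose end is the parenthetical ``or the weaker version of (H)'' clause. Your derivation invokes Theorems~\ref{thm:nucltime}--\ref{thm:explaw}, but those are stated in the paper only \emph{subject to hypothesis (H)}; under the weaker requirement of Section~\ref{althyp} ($\mathscr{V}_\sigma$ bounded by $\Gamma_n^+$, rather than $\Omega_\meta=\{\boxminus\}$) those theorems are not available, so the clean two-line deduction you give does not literally apply. You acknowledge this and fall back to ``the ingredients isolated in Dommers'', which is the right move and is in fact what the paper itself does --- but it means that in the weaker-(H) regime your argument defers entirely to the cited reference rather than proving anything. To make the write-up honest you should flag that the upper-tail bound in that regime comes from a direct escape-time argument (every stability level is $\le\Gamma_n^+$, as in Dommers' Lemma~5.3), while the lower-tail bound is a barrier-crossing estimate that does not need (H) at all, since it only uses $\Phi(\boxminus,\boxplus)-\cH(\boxminus)=\Gamma_n^\star\ge\Gamma_n^-$. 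With that caveat, the proposal matches the intent of the corollary and I see no substantive gap.
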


\noindent
In Corollary~\ref{cor:mvalues} we compute $\bar{m}$, $\ell_{\bar{m}}$, $\tilde{m}$
for two degree distributions: Dirac distributions and power-law distributions.
It is clear that $\tilde{m}=\lceil \tfrac12 n\rceil$ for Dirac distributions.

The bounds we have found in Theorems~\ref{thm:Ubound}--\ref{thm:Lbound} are tight
in the limit of large degrees. Indeed, by the law of large numbers we have that
\begin{equation}
\ell_{n}\frac{\ell_{\bar{m}}}{\ell_{n}}
\left(1-\frac{\ell_{\bar{m}}}{\ell_{n}}\right)
\leq\tfrac{1}{4}\ell_{n}=\tfrac{1}{4} d_{\mathrm{ave}}\,n\left[1+o(1)\right].
\end{equation}
Hence
\begin{equation}
\frac{\Gamma_n^+}{\Gamma_n^-}
= \frac{\frac{1}{4}d_{\mathrm{ave}}\left[1+o\left(1\right)\right]
-\frac{h}{J}\frac{\bar{m}}{n}+o(1)}{d_{\mathrm{ave}}
I_{d_{\mathrm{ave}}}\left(\frac{1}{2}\right)-\frac{h}{J}\frac{\tilde{m}}{n}
-o(1)}.
\label{eq:ratioofbounds}
\end{equation}
In the limit as $d_{\mathrm{ave}}\to\infty$ we have $I_{d_{\mathrm{ave}}}
\left(\frac{1}{2}\right)\to\frac{1}{4}$, in which case \eqref{eq:ratioofbounds}
tends to 1.


\subsection{Discussion}
\label{discussion}

We close this introduction by discussing our main results.

\medskip\noindent
{\bf 1.}
We believe that Theorem~\ref{thm:hyp} holds as soon as
\begin{equation}
0 < h < (d_\mathrm{min}-1)J,
\end{equation}
i.e., we believe that in the limit as $\beta\to\infty$ followed by $n\to\infty$ this choice
of parameters corresponds to the \emph{metastable regime} of our dynamics, i.e.,
the regime where $(\boxminus,\boxplus)$ is a \emph{metastable pair} in the sense
of \cite[Chapter 8]{BdH15}.

\medskip\noindent
{\bf 2.}
The scaling behaviour of $\Gamma_n^\star$ as $n\to\infty$, as well as the geometry
of $\cC_n^\star$ are hard to capture. We can only offer some conjectures.

\begin{conjecture}
\label{conj:scalnucltime}
There exists a $\gamma^\star \in (0,\infty)$ such that
\begin{equation}
\lim_{n\to\infty} \PP_n\Big( \big| n^{-1} \Gamma^\star_n - \gamma^\star\big|
> \delta\Big) = 0 \qquad \forall\,\delta>0.
\end{equation}
\end{conjecture}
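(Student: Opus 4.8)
We outline a possible route to Conjecture~\ref{conj:scalnucltime}.

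\textbf{Step 1: a min--max cut reformulation.}
For any configuration $\sigma\in\Omega$, set $U=\{v\in V\colon\sigma(v)=+1\}$; using $e(U)+e(U^c)+e(U,U^c)=|E|$ one obtains the exact identity $\cH(\sigma)-\cH(\boxminus)=J\,e(U,U^c)-h|U|$. Along any allowed path from $\boxminus$ to $\boxplus$ the cardinality $|\{v\colon\sigma(v)=+1\}|$ increases from $0$ to $n$ in unit steps, so such a path visits a configuration whose $(+1)$-set has size $k$ for every $0\le k\le n$. Writing $\mathrm{mincut}(k)=\min_{|U|=k}e(U,U^c)$ and $U_k^\pi=\{v_{\pi(1)},\dots,v_{\pi(k)}\}$, this yields, for every ordering $\pi\in S_n$ of the vertices,
\[
\max_{0\le k\le n}\big[J\,\mathrm{mincut}(k)-hk\big]\ \le\ \Gamma^\star_n\ \le\ \max_{0\le k\le n}\big[J\,e\big(U_k^\pi,(U_k^\pi)^c\big)-hk\big],
\]
the left bound being behind Theorem~\ref{thm:Lbound} and the right one, specialised to the ordering by increasing degree, being Theorem~\ref{thm:Ubound}. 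The plan is to show that the two sides coincide to leading order, i.e.\ $\Gamma^\star_n=n\gamma^\star+o(n)$ w.h.p.\ with $\gamma^\star=\sup_{0\le\alpha\le1}\big[J\,c_f(\alpha)-h\alpha\big]\in(0,\infty)$, where $c_f$ is a deterministic \emph{minimum-cut density profile} depending only on $f$.

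\textbf{Step 2: the cut large-deviation input and the lower bound.}
The analytic core is a large-deviation estimate for cuts in $\CM_n$: for each $\alpha\in(0,1)$ there is $c_f(\alpha)>0$ with
\[
\frac1n\,\log\PP_n\Big(\exists\,U\colon|U|=\lfloor\alpha n\rfloor,\ e(U,U^c)\le(c_f(\alpha)-\eps)n\Big)\ \longrightarrow\ -\infty\qquad(\eps>0),
\]
so that $\mathrm{mincut}(\lfloor\alpha n\rfloor)=n\,c_f(\alpha)+o(n)$ w.h.p.; this is precisely the kind of estimate that underlies Theorem~\ref{thm:Lbound}, its rate function being governed by the function $I_\delta$ of \eqref{eq:functionI}. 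A union bound over the $\binom{n}{\lfloor\alpha n\rfloor}$ candidate sets, applied along a fine mesh of values of $\alpha$, then gives the \emph{lower} bound $\Gamma^\star_n\ge n\gamma^\star-o(n)$ w.h.p. That $c_f$ is bounded away from $0$ follows from expansion (since $\dmin\ge3$, $\CM_n$ is w.h.p.\ an expander), and away from $\infty$ since any cut is at most $\ell_n=\dave\,n\,(1+o(1))$; hence $\gamma^\star\in(0,\infty)$, consistent with the conjectured limit.

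\textbf{Step 3: the matching upper bound, and concentration.}
For the \emph{upper} bound one must produce, w.h.p., a single nested family $\emptyset=U_0\subset U_1\subset\cdots\subset U_n=V$ with $e\big(U_{\lfloor\alpha n\rfloor},U_{\lfloor\alpha n\rfloor}^c\big)\le(c_f(\alpha)+\eps)n$ for all $\alpha$ near the maximiser of $\alpha\mapsto Jc_f(\alpha)-h\alpha$ (and at most $\ell_n$ elsewhere); feeding this chain into the right-hand inequality of Step~1 yields $\Gamma^\star_n\le n\gamma^\star+o(n)$. Concentration of $\Gamma^\star_n$ around its mean is, by contrast, cheap: a single \emph{switching} of the stub pairing (replace edges $\{a,b\},\{c,d\}$ by $\{a,c\},\{b,d\}$) leaves $|E|$, hence $\cH(\boxminus)$, unchanged and alters $e(U,U^c)$ for every $U$ by at most $2$, so it changes $\Gamma^\star_n$ by at most $2J$, a bound uniform in the degrees. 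Conditioning on a typical degree sequence $\mathbf d$ (so that $\ell_n\in(1\pm\eps)\dave n$, by the strong law) and applying Azuma's inequality to the Doob martingale that reveals the pairing stub by stub ($O(\dave n)$ steps, increments $O(J)$) gives $|\Gamma^\star_n-\EE_n[\Gamma^\star_n\mid\mathbf d]|=O(\sqrt{n\log n})$ w.h.p.; thus it in fact suffices to control the conditional mean, and everything reduces to the two bounds above.

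\textbf{The main obstacle.}
The hard step is the upper bound in Step~3: the (exponentially many) near-minimal cuts at the various densities $\alpha$ must be \emph{threaded into a single increasing chain}, a joint large-deviation statement for a chain of nested cuts that is not delivered by the per-density estimates of Step~2. I expect this to require either a second-moment / switching argument showing that the density-$\alpha$ minimizers are abundant and can be grown one vertex at a time without the boundary jumping, or an explicit refinement procedure (e.g.\ repeatedly adding a vertex of smallest marginal boundary increase) together with an analysis certifying near-optimality throughout the sweep. Two subsidiary difficulties: (i) since only $\dave<\infty$ is assumed, vertices of anomalously large degree can dominate boundaries, so the cut estimate of Step~2 may need a truncation of the high-degree vertices (or a mild extra moment hypothesis on $f$); and (ii) one should verify that passing to monotone paths in the right-hand inequality of Step~1 is lossless to leading order for $\CM_n$, which is standard for ferromagnetic systems (cf.\ \cite[Chapter~16]{BdH15}) and in any case only needs accuracy $o(n)$.
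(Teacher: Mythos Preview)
The statement you address is stated in the paper as a \emph{conjecture}; the paper offers no proof. What the paper does provide toward it is (i) non-matching upper and lower bounds on $n^{-1}\Gamma^\star_n$ (Theorems~\ref{thm:Ubound}--\ref{thm:Lbound}), which coincide only in the limit $d_{\mathrm{ave}}\to\infty$, and (ii) a tail-triviality argument for a dynamical construction of $\CM_n$ (Section~\ref{tailprop}) showing that $\gamma^*_-=\liminf_{n\to\infty} n^{-1}\Gamma^\star_n$ and $\gamma^*_+=\limsup_{n\to\infty} n^{-1}\Gamma^\star_n$ are a.s.\ constant; the paper explicitly remarks that settling the conjecture amounts to showing $\gamma^*_-=\gamma^*_+$. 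So there is no ``paper's own proof'' to compare against; your proposal is a strategy for an open problem.

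Your roadmap is coherent and you correctly isolate the chief obstruction. The Azuma/switching concentration in Step~3 is a genuine addition relative to what the paper does: it gives $|\Gamma^\star_n-\EE_n[\Gamma^\star_n\mid\mathbf d]|=O(\sqrt{n\log n})$ w.h.p., reducing the conjecture to convergence of $n^{-1}\EE_n[\Gamma^\star_n\mid\mathbf d]$, whereas the paper's tail-triviality only yields that the $\liminf$ and $\limsup$ are deterministic without controlling their gap.

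There is, however, a gap you underplay. In Step~2 you assert that the large-deviation input gives $\mathrm{mincut}(\lfloor\alpha n\rfloor)=n\,c_f(\alpha)+o(n)$, but the union-bound/rate-function argument behind \eqref{eq:functionI} and Theorem~\ref{thm:Lbound} delivers only the \emph{lower} bound $\mathrm{mincut}(\lfloor\alpha n\rfloor)\ge n\,c_f(\alpha)-o(n)$. Establishing the matching \emph{upper} bound on the minimum cut at each density~$\alpha$---i.e., that a set of the prescribed size with near-minimal edge boundary actually exists w.h.p.---is essentially the minimum-bisection problem for $\CM_n$ with a general degree law, which is itself open. This is logically prior to, and at least as hard as, the ``threading'' problem you flag as the main obstacle: before one can thread near-minimal cuts into a single nested chain, one must know that the pointwise limit $c_f(\alpha)$ exists at all. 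Without that, your definition $\gamma^\star=\sup_\alpha[J\,c_f(\alpha)-h\alpha]$ is not well-posed. In short, the programme has two hard analytic inputs rather than one: pointwise convergence of the mincut profile, and then the nestedness/monotone-path approximation; the paper's bounds reflect precisely the current inability to close either.
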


\begin{conjecture}
\label{conj:scalcritdrop}
There exists a $c^\star \in (0,1)$ such that
\begin{equation}
\lim_{n\to\infty} \PP_n\Big( \big| n^{-1} \log|\cC^\star_n| - c^\star\big|
> \delta\Big) = 0 \qquad \forall\,\delta>0.
\end{equation}
\end{conjecture}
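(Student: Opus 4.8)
\smallskip\noindent
\emph{A strategy towards Conjecture~\ref{conj:scalcritdrop}.}
The plan is to turn the count $|\cC^\star_n|$ into a sharp edge-isoperimetric counting problem on $\CM_n$ and then attack the latter by first- and second-moment arguments. Identify a configuration with the set $U\subseteq V$ of its up-spins; a one-line computation gives
\begin{equation}
\cH(\xi_U)-\cH(\boxminus) = J\,\partial(U) - h\,|U|,
\end{equation}
where $\partial(U)$ denotes the number of edges of $\CM_n$ between $U$ and $V\setminus U$, counted with multiplicity. Thus the energy landscape relative to $\boxminus$ is controlled by the edge-boundary functional, and --- assuming $h/J\notin\Q$ --- the relation $\cH(\xi_U)=\cH(\boxminus)+\Gamma^\star_n$ forces a single pair of values $|U|=m^\star_n$ and $\partial(U)=b^\star_n:=(\Gamma^\star_n+h m^\star_n)/J$. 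Hence $\cC^\star_n\subseteq\{U\colon|U|=m^\star_n,\ \partial(U)=b^\star_n\}$, and the first task is to (i) identify $(m^\star_n,b^\star_n)$ to the precision $o(n)$ needed in the exponent --- which is essentially the structural refinement of Theorems~\ref{thm:Ubound}--\ref{thm:Lbound} underlying Conjecture~\ref{conj:scalnucltime}, describing the size and the boundary of the saddle configurations along an optimal crossover path on a generic realisation --- and (ii) show that the "capable of crossing over" conditions (2)--(3) in the definition of $(\cP^\star,\cC^\star)$, together with the $o(n)$-width of the window of saddle sizes, remove only an $e^{o(n)}$ fraction of the candidates; for (ii) one exhibits, w.h.p.\ from a generic set of size $m^\star_n$ and boundary $b^\star_n$, a downhill path to $\boxplus$ that stays out of the $\boxminus$-basin. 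Throughout I condition on a typical degree sequence $\mathbf d=(d_1,\dots,d_n)$, legitimate since the $D_i$ are i.i.d.\ with finite mean.

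\smallskip
For the upper bound I compute the first moment. Fix $U$ with $|U|=m^\star_n$; under the uniform stub-matching, $\partial(U)$ is a matching-hypergeometric sum over the $s_U=\sum_{i\in U}d_i$ stubs incident to $U$, and standard Configuration-Model large-deviation estimates --- Laplace tilting of the uniform matching, or comparison with independent pairing --- give $\PP_n(\partial(U)=b\mid\mathbf d)=\exp\{-n\,\Psi_{\mathbf d}(b/n;\rho_U)+o(n)\}$ for an explicit convex rate function $\Psi_{\mathbf d}$ depending only on $b/n$ and on the empirical degree distribution $\rho_U$ of $U$. Summing the indicator over the $\binom{n}{m^\star_n}$ choices of $U$, grouped by $\rho_U$, yields $\EE_n[\,|\cC^\star_n|\mid\mathbf d\,]=\exp\{n\,\Lambda(\mathbf d)+o(n)\}$, where $\Lambda(\mathbf d)$ is the value of a finite-dimensional variational problem balancing the entropy of the choice of $U$ against the penalty $\Psi_{\mathbf d}$, and $\Lambda(\mathbf d)\to\Lambda^\star=\Lambda^\star(f,h,J)$ along typical $\mathbf d$. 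Set $c^\star:=\Lambda^\star$. Markov's inequality gives $\limsup_n n^{-1}\log|\cC^\star_n|\le c^\star$ w.h.p., and $|\cC^\star_n|\le 2^n$ forces $c^\star\le\log 2<1$ (strictly less once the isoperimetric penalty is accounted for).

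\smallskip
For the matching lower bound I use a second-moment argument on the exponential scale. Writing $\EE_n[\,|\cC^\star_n|^2\mid\mathbf d\,]=\sum_{U_1,U_2}\PP_n(U_1,U_2\text{ both critical}\mid\mathbf d)$, I split the double sum by the overlap $a=|U_1\cap U_2|$ and by the degree profiles of $U_1\cap U_2$, $U_1\setminus U_2$, $U_2\setminus U_1$. For each fixed overlap the joint matching probability factorises over these pieces up to $e^{o(n)}$, since the stubs of $U_1\setminus U_2$ and of $U_2\setminus U_1$ interact only through an $O(1)$-dimensional family of interface constraints; maximising over $a$ shows the dominant term occurs at the "independent" overlap $a\approx(m^\star_n)^2/n$, where the joint exponent is exactly twice the single one, so $\EE_n[\,|\cC^\star_n|^2\mid\mathbf d\,]\le e^{o(n)}(\EE_n[\,|\cC^\star_n|\mid\mathbf d\,])^2$. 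The Paley--Zygmund inequality then gives $|\cC^\star_n|\ge e^{-o(n)}\EE_n[\,|\cC^\star_n|\mid\mathbf d\,]$ with probability bounded below, and an Azuma--Hoeffding estimate for the $e^{o(n)}$-Lipschitz functional $n^{-1}\log|\cC^\star_n|$ along the stub-matching exposure martingale upgrades this to w.h.p. Combining the two bounds and averaging over $\mathbf d$ yields $n^{-1}\log|\cC^\star_n|\to c^\star$ in probability, with $c^\star\in(0,1)$; the point $c^\star>0$ is precisely the assertion that, at the critical density, the entropy of the choice of $U$ strictly dominates the cost of forcing the boundary to equal $b^\star_n$.

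\smallskip
The main obstacle is step (i) of the first paragraph: understanding the optimal crossover path well enough on a \emph{generic} realisation of $\CM_n$ to pin down $(m^\star_n,b^\star_n)$. This amounts to closing the gap between Theorems~\ref{thm:Ubound} and~\ref{thm:Lbound} --- deciding whether, and by how much, an optimal monotone path can undercut the typical-boundary sets $\{v_1,\dots,v_m\}$ used for $\Gamma_n^+$ by routing through near-isoperimetric minimisers, a question obstructed by the absence of reflection/Wulff tools on random graphs --- and in establishing the concentration of $\Gamma^\star_n$ required for Conjecture~\ref{conj:scalnucltime}. The value of $c^\star$, and even its positivity, hinges on this: if $b^\star_n$ is the typical boundary at size $m^\star_n$ then $\Psi_{\mathbf d}$ contributes only $o(n)$ and $c^\star$ reduces to an entropy term, while if $b^\star_n$ lies strictly below typical the penalty is genuinely exponential and one must still show the entropy wins. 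A secondary obstacle is that conditions (2)--(3) defining $(\cP^\star,\cC^\star)$ are not local, so the $e^{o(n)}$ accounting in step (ii) needs its own percolation-style construction of escape paths from critical minimisers to $\boxplus$.
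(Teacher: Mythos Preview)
The paper does not prove this statement: Conjecture~\ref{conj:scalcritdrop} is explicitly left open. The only supporting evidence offered is that the analogous statements hold for the torus, the hypercube and the complete graph (Section~\ref{literature}), and that the dynamical construction of $\CM_n$ yields tail-triviality (Section~\ref{tailprop}), which at best gives almost-sure constancy of any $\liminf$/$\limsup$ of $n^{-1}\log|\cC^\star_n|$ but not their equality. So there is nothing in the paper to compare your argument against; what you have written is a proof \emph{programme}, and you label it as such.

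As a programme it is coherent, and you correctly isolate the chief obstruction: the first/second-moment machinery only engages once $(m^\star_n,b^\star_n)$ is located, which is precisely the content of Conjecture~\ref{conj:scalnucltime} and the gap between Theorems~\ref{thm:Ubound} and~\ref{thm:Lbound}. Two further difficulties deserve to be flagged. First, your concentration step via Azuma--Hoeffding along the stub-exposure martingale is fragile: $|\cC^\star_n|$ is determined by the \emph{exact} equality $J\partial(U)-h|U|=\Gamma^\star_n$ together with the non-local crossing conditions, and a single stub rewiring can shift $\Gamma^\star_n$ (hence $b^\star_n$) and simultaneously alter $\partial(U)$ for a linear number of sets $U$; it is not clear that $n^{-1}\log|\cC^\star_n|$ enjoys bounded differences, so Paley--Zygmund alone may have to be boosted by some other self-averaging mechanism (e.g.\ the coupling in Section~\ref{tailprop}). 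Second, your step (ii) --- that conditions (2)--(3) in the definition of $(\cP^\star,\cC^\star)$ discard only an $e^{o(n)}$ fraction of the energy-$\Gamma^\star_n$ sets --- is itself a structural statement about generic near-isoperimetric sets admitting monotone escape paths to $\boxplus$; this is close in spirit to the path constructions used in Section~\ref{metstate} to verify hypothesis~(H), but those arguments work up to errors of order $\Gamma^\star_n$, not the $o(1)$ energy precision needed here, so a genuinely new idea would be required.
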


\begin{conjecture}
\label{conj:scalprefac}
There exists a $\kappa^\star \in (1,\infty)$ such that
\begin{equation}
\lim_{n\to\infty} \PP_n\Big( \big| |\cC^\star_n|\,K^\star_n - \kappa^\star\big|
> \delta\Big) = 0 \qquad \forall\,\delta>0.
\end{equation}
\end{conjecture}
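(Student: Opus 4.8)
Since the final statement is a \emph{conjecture} rather than a settled result, what follows is a proposed line of attack, not a proof. The natural route is the potential-theoretic one underlying Theorems~\ref{thm:nucltime}--\ref{thm:explaw}. \textbf{Step 1: a workable formula for $K^\star_n$.} Starting from the Bovier--Eckhoff--Gayrard--Klein representation (see \cite[Chapter~16]{BdH15}), under hypothesis (H) one has, in the limit $\beta\to\infty$,
\[
K^\star_n = \frac{1}{C^\star_n}\,[1+o(1)], \qquad
C^\star_n := \lim_{\beta\to\infty} e^{\beta\Gamma^\star_n}\,\mathrm{cap}_\beta(\boxminus,\boxplus),
\]
because the metastable well around $\boxminus$ is a non-degenerate local minimum of $\cH$ and hence contributes $\mu_\beta$-mass $1+o(1)$. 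Thus Conjecture~\ref{conj:scalprefac} is equivalent to the statement that the \emph{per-critical-configuration capacity} $C^\star_n/|\cC^\star_n|$ converges in $\PP_n$-probability to a constant in $(0,1)$, whose inverse is $\kappa^\star$.

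\textbf{Step 2: identify $\cC^\star_n$ and localise the capacity.} From the analysis behind Theorems~\ref{thm:Ubound} and \ref{thm:Lbound}, a critical configuration should be an up-set $U\subseteq V$ with $|U|=\Theta(n)$ such that (i) flipping any further vertex up does not strictly decrease $\cH$, and (ii) there is a path to $\boxplus$ staying at height $\le\Gamma^\star_n$; the requirement $\cH=\Gamma^\star_n$ \emph{exactly} then pins the number $e(U,U^c)$ of edges across the cut. Granting such a description, decompose $\mathrm{cap}_\beta(\boxminus,\boxplus)$ via the Dirichlet (variational) principle with a test function that is constant on the two sides of the ``gate'' through $\cC^\star_n$, yielding an upper bound that is a sum of $\Theta(|\cC^\star_n|)$ local conductances; a matching lower bound follows from Thomson's flow principle with a unit flow routed through $\cC^\star_n$. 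Crucially, each local term should depend only on the degrees incident to the boundary $\partial U$ and on the graph structure inside a bounded neighbourhood of $\partial U$.

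\textbf{Step 3: self-averaging.} Show $C^\star_n/|\cC^\star_n|$ concentrates. The only randomness is the uniform matching of stubs; expose it pair by pair and apply a bounded-difference (Azuma--Hoeffding) estimate: a single re-matching perturbs $\cC^\star_n$ and $\mathrm{cap}_\beta$ by a factor $e^{o(n)}$ but changes the \emph{ratio} $C^\star_n/|\cC^\star_n|$ only by $O(n^{-1})$ up to logarithms, which suffices for concentration around the mean. To identify the mean, invoke the local weak convergence of $\CM_n$ to the Galton--Watson tree whose offspring law is the size-biased version of $f$: since each local conductance depends on a bounded neighbourhood of $\partial U$, its expectation converges, and one reads off $1/\kappa^\star\in(0,1)$. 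The bound $\kappa^\star>1$ should come, as in the complete-graph and lattice examples of Section~\ref{literature}, from strict positivity of the harmonic corrections at the saddle (extra vertices that can be flipped downhill), and $\kappa^\star<\infty$ from $\dave<\infty$ together with $\dmin\ge3$. Establishing Conjectures~\ref{conj:scalnucltime} and \ref{conj:scalcritdrop} within the same framework would supply the required inputs $\Gamma^\star_n=\gamma^\star n\,[1+o(1)]$ and $|\cC^\star_n|=e^{c^\star n\,[1+o(1)]}$.

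\textbf{Main obstacle.} The crux is Step~2: there is currently no usable description of $\cC^\star_n$ for the Configuration Model. Unlike $\Gamma^\star_n$, which tolerates an $o(n)$ error and is therefore controlled by the law-of-large-numbers bounds of Theorems~\ref{thm:Ubound}--\ref{thm:Lbound}, the set $\cC^\star_n$ collects configurations of \emph{exactly} the critical energy, so even an asymptotically sharp value of $\Gamma^\star_n$ does not pin it down; one must effectively solve a random integer-optimisation problem and show that its solution set is ``rigid'' enough for the associated capacity to self-average. A secondary difficulty is that the gate graph of configurations at height $\le\Gamma^\star_n$ near $\cC^\star_n$ may, for a generic realisation, break into many components with an intricate structure, so the Dirichlet and Thomson bounds must be matched component by component, with control uniform over exponentially many components.
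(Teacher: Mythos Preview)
The paper does not prove this statement: it is explicitly labeled a conjecture and left open, with the only supporting evidence being that the analogous claim holds for the torus, the hypercube and the complete graph (Section~\ref{literature}). You correctly recognise this at the outset, so there is no ``paper's proof'' to compare against.

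Your outline is a sensible heuristic, and your diagnosis of the main obstacle matches exactly what the paper says elsewhere (Section~\ref{literature}, Erd\H{o}s--R\'enyi discussion): unlike $\Gamma^\star_n$, which tolerates $o(n)$ errors, the set $\cC^\star_n$ consists of configurations at \emph{exactly} the critical energy, so identifying it is ``a formidable task''. One point worth flagging in your Step~3: the bounded-difference argument you sketch is optimistic. A single stub re-matching can change $\Gamma^\star_n$ by $O(1)$ (Lemma~\ref{lem: gammadifference}), but because $\cC^\star_n$ is defined by an exact energy level, such a perturbation can in principle empty or repopulate $\cC^\star_n$ entirely, so the claim that the ratio $C^\star_n/|\cC^\star_n|$ changes by only $O(n^{-1})$ would itself require the rigidity you are trying to establish. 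This circularity is part of why the conjecture is hard, and your ``Main obstacle'' paragraph already hints at it.
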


\noindent
As is clear from the results mentioned in Section~\ref{literature}, all three conjectures
are true for the torus, the hypercube and the complete graph. This supports our belief that
they should be true for a large class of random graphs as well.

\medskip\noindent
{\bf 3.}
In Section~\ref{CMdynamic} we will give a \emph{dynamical construction} of $\CM_n$
in which vertices are added one at a time and edges are relocated. This leads to a
\emph{random graph process} $(\CM_n)_{n\in\N}$ whose marginals respect the law
of the Configuration Model. In Section~\ref{tailprop} we will show that this process is
\emph{tail trivial}, i.e., all events in the tail sigma-algebra
\begin{equation}
\cT = \cap_{N \in \N}\, \sigma\left(\cup_{n \geq N} \CM_n\right)
\end{equation}
have probablity 0 or 1. Consequently, the associated communcation height process
$(\Gamma^\star_n)_{n\in\N}$ with $\Gamma^\star_n = \Gamma^\star(\CM_n)$
is tail trivial as well. In particular, both $\gamma^*_- = \liminf_{n\to\infty} n^{-1}
\Gamma^\star_n$ and $\gamma^*_+ = \limsup_{n\to\infty} n^{-1} \Gamma^\star_n$
exists and are constant a.s. Theorems~\ref{thm:Ubound}--\ref{thm:Lbound} show
that $0<\gamma^*_- \leq \gamma^*_+<\infty$. Settling Conjecture~\ref{conj:scalnucltime}
amounts to showing that $\gamma^*_-=\gamma^*_+$.

\medskip\noindent
{\bf 4.}
It was shown by Dommers~\cite{Dpr} that for the Configuration Model with $f=\delta_r$,
$r \in \N\backslash\{1,2\}$, i.e., for a random regular graph with degree $r$, there exist
constants $0<\gamma_-^\star(r)<\gamma_+^\star(r)<\infty$ such that
\begin{equation}
\label{eq:sandwich}
\lim_{n\to\infty} \lim_{\beta\to\infty}
\EE_n\left( P^{\CM_n}_\boxminus\left( e^{\beta n\gamma_-^\star(r)}
\leq \tau_\boxplus \leq e^{\beta n\gamma_+^\star(r)}\right) \right) = 1,
\end{equation}
provided $\frac{h}{J} \in (0,C_0\sqrt{r})$ for some constant $C_0 \in (0,\infty)$ that is small
enough. Moreover, there exist constants $C_1 \in (0,\tfrac14\sqrt{3})$ and $C_2 \in
(0,\infty)$ (depending on $C_0$) such that
\begin{equation}
\gamma_-^\star(r) \geq \tfrac14 Jr - C_1J\sqrt{r}, \qquad
\gamma_+^\star(r) \leq \tfrac14 Jr + C_2J\sqrt{r}, \qquad r \in \N\backslash\{1,2\}.
\end{equation}
The result
in \eqref{eq:sandwich} is derived without hypothesis (H), but it is shown that hypothesis (H)
holds as soon as $r \geq 6$.

\medskip\noindent
{\bf Outline.}
The rest of the paper is organised as follows. In Section~\ref{metstate} we prove that hypothesis
(H) holds under certain constraints on the magnetic field $h$ and the minimal degree of the graph
$d_\mathrm{min}$. Section~\ref{althyp} gives an alternative to hypothesis (H), which holds for a
broader range parameters, yet still permits us to claim our bounds on the crossover time. In
Section~\ref{thmproofs} we prove our upper and lower bounds on $\Gamma_n^\star$. Part of
this proof depends on a \emph{dynamical construction} of $\CM_n$. In Section~\ref{tailprop} we
derive certain properties of this construction.


\section{Proof of Theorem~\ref{thm:hyp}}
\label{metstate}

This section gives a proof of hypothesis (H). We start with the following remark about
the configurations in $\Omega$.

\begin{remark}
{\rm A natural isomorphism between configurations and subsets of vertices of the
underlying graph $G=(V,E)$ comes from identifying $\xi\in\Omega$ with the set
$\{v \in V\colon\,\xi(v)=+1\}$. With this in mind, we denote by $\overline{\xi}$ the
configuration corresponding to the complement of this set: $\{v\in V\colon\,\xi(v)=-1\}$.
Furthermore, for $\zeta,\sigma\in\Omega$ we denote by $E(\zeta,\sigma) \subseteq E$
the set of all unoriented edges $\{(v,w)\in E\colon\,\zeta(v)=\sigma(w)=+1\}$. The main
use of the last definition will be for $\sigma=\overline{\zeta}$: $E(\zeta,\overline{\zeta})$
is the edge boundary of the set $\{v\in V\colon\,\zeta(v)=+1\}$.}
\end{remark}

We next give two lemmas that will be useful later on.

\begin{lemma}
\label{lem:Ideltabound}
For all $\delta \geq 2$ and $0<x\leq\tfrac12$, $I_{\delta}\left(x\right) \leq \left(1-x\right)
-\left(1-x\right)^{2\left(1-1/\delta\right)}$.
\end{lemma}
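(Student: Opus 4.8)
The plan is to reduce the claim to a single pointwise inequality, $g(c)\ge 0$, and then to establish that by a \emph{concavity argument in the parameter} $\delta$. Throughout set $a:=1-\tfrac{1}{\delta}$, so that $\delta\ge 2$ corresponds to $a\in[\tfrac12,1)$, and write $c:=(1-x)-(1-x)^{2a}$ for the right‑hand side of the asserted bound. Taking logarithms in \eqref{eq:functionI} gives $I_\delta(x)=\inf\{0<y\le x:\ g(y)>0\}$ with
\[
g(y):=a\,K_x+h(y),\qquad K_x:=x\log x+(1-x)\log(1-x)\le 0,
\]
\[
h(y):=-\tfrac{1-x-y}{2}\log(1-x-y)-\tfrac{x-y}{2}\log(x-y)-y\log y ,
\]
so it suffices to show $g(c)\ge 0$: as explained below, this forces $I_\delta(x)\le c$.

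First I would record elementary facts about $h$, all by differentiation: $h'(y)=\tfrac12\log(1-x-y)+\tfrac12\log(x-y)-\log y$ and $h''(y)=-\tfrac1{2(1-x-y)}-\tfrac1{2(x-y)}-\tfrac1y<0$, so $h$ is strictly concave; it is strictly increasing on $(0,y^\star]$ and strictly decreasing on $[y^\star,x)$, where $y^\star:=x(1-x)$, and direct substitution gives $h(y^\star)=-K_x$, $h(0^+)=-\tfrac12 K_x$, $h'(0^+)=+\infty$. Since $0<x\le\tfrac12$ and $a\le 1$ one checks $0\le c\le x$, $1-x-c=(1-x)^{2a}$, and $c\le y^\star$ (the last because $(1-x)^{2a}\ge(1-x)^2$ when $2a\le2$). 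As $g-h$ is constant in $y$, $g$ is also strictly increasing on $(0,y^\star]$. Hence $g(c)\ge 0$ indeed yields $I_\delta(x)\le c$: for $a\in(\tfrac12,1)$ one will have $g(c)>0$ with $0<c<x$, so $c$ lies in the set defining $I_\delta(x)$; for the boundary case $a=\tfrac12$ one has $c=0$, and $g(0^+)=0$ together with $g'(0^+)=+\infty$ makes $g>0$ on some interval $(0,\eps)$, so the infimum equals $0=c$.

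The core of the argument is $g(c)\ge 0$ for $a\in[\tfrac12,1)$. The key idea is to regard
\[
G(a):=g(c)=a\,K_x+h\big((1-x)-(1-x)^{2a}\big)
\]
as a function of $a$ on $[\tfrac12,1]$, with $x$ held fixed. Because $c=0$ at $a=\tfrac12$ and $c=x(1-x)=y^\star$ at $a=1$, the facts above give $G(\tfrac12)=\tfrac12K_x+h(0^+)=0$ and $G(1)=K_x+h(y^\star)=0$. Writing $c'=\tfrac{d}{da}c=-2(1-x)^{2a}\log(1-x)\ge 0$ and $c''=-4(1-x)^{2a}\big(\log(1-x)\big)^2\le 0$, one computes
\[
G''(a)=h''(c)\,(c')^2+h'(c)\,c'' .
\]
On $(\tfrac12,1)$ we have $h''(c)<0$ and $(c')^2>0$ (since $0<x\le\tfrac12$), while $c''\le 0$ and $h'(c)\ge 0$ (because $c\le y^\star$); therefore $G''<0$. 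A function continuous on $[\tfrac12,1]$, concave on its interior, and vanishing at both endpoints is nonnegative on $[\tfrac12,1]$, and in fact strictly positive on $(\tfrac12,1)$ by strict concavity; hence $G(a)\ge 0$, i.e. $g(c)\ge 0$, for every admissible $\delta$. Together with the reduction above this proves $I_\delta(x)\le c$.

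I expect the main obstacle to be organisational rather than computational: the only genuine calculation is the sign of $G''$, which follows at once from the concavity of $h$ and from $c\le y^\star=x(1-x)$, and the one subtle point is the degenerate endpoint $\delta=2$, where $c=0$ and one must use the blow‑up $g'(0^+)=+\infty$ to see that the infimum defining $I_\delta(x)$ is still (in the limit) equal to $0$.
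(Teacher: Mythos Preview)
Your argument is correct, and it is a genuine analytical proof where the paper offers only a numerical verification. The paper simply substitutes $\tilde y=(1-x)-(1-x)^{2(1-1/\delta)}$ into the expression defining $I_\delta$, calls the result $\tilde I(x,w)$, displays a contour plot of $\tilde I$, and asserts that $\tilde I\ge 1$ ``can be verified numerically''. Your route is the same reduction (showing $g(c)\ge 0$ for the specific choice $c=(1-x)-(1-x)^{2a}$), but you prove the inequality by treating $G(a)=g(c(a))$ as a function of $a\in[\tfrac12,1]$, checking $G(\tfrac12)=G(1)=0$ via the closed-form values $h(0^+)=-\tfrac12K_x$ and $h(x(1-x))=-K_x$, and then using strict concavity of $G$ (from $h''<0$, $c''\le 0$, and $h'(c)\ge 0$ since $c\le y^\star=x(1-x)$) to force $G>0$ on the interior. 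This is elegant and entirely self-contained; the only delicate point is the boundary $\delta=2$, which you handle correctly by noting $g(0^+)=0$ and $g'(0^+)=+\infty$. The payoff of your approach is a rigorous proof that does not rely on inspecting a plot; the paper's approach has no such guarantee but is quicker to state.
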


\begin{proof}
The claim can be verified numerically. For $w \in \left(0,\frac{1}{2}\right]$, let $\tilde{y}
=\left(1-x\right)-\left(1-x\right)^{2\left(1-w\right)}$. Fig.~\ref{fig:cplot} gives a contour 
plot of the function
\begin{equation}
\tilde{I}\left(x,w\right) = x^{x\left(1-w\right)}\left(1-x\right)^{\left(1-x\right)
\left(1-w\right)}\left(1-x-\tilde{y}\right)^{-\left(1-x-\tilde{y}\right)/2}
\left(x-\tilde{y}\right)^{-\left(x-\tilde{y}\right)/2}\tilde{y}^{-\tilde{y}}.
\end{equation}

\begin{figure} [htbp]
\begin{center}
\includegraphics[scale = 0.3]{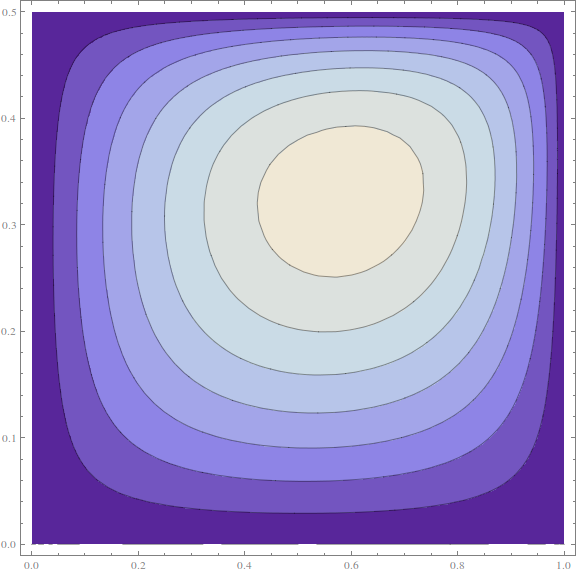}
\end{center}
\caption{\small
A contour plot of $\tilde{I}\left(x,w\right)$ for $x\in \left(0,1\right)$ and 
$w \in \left(0,\frac{1}{2}\right]$. A lighter colour indicates a larger value.}
\label{fig:cplot}
\end{figure}

\noindent 
Note that $\tilde{I}\left(x,w\right)\geq 1$, which immediately implies 
Lemma~\ref{lem:Ideltabound} when we take $w=1/\delta$. It is easy 
to verify that the boundary values corresponding to $x \downarrow 0$ 
and $x \uparrow 1$ result in $\tilde{I}\left(x,w\right) \downarrow 1$.
\end{proof}

\begin{lemma}
\label{lem:Ideltamonotone}
The function $x\rightarrow\frac{I_{\delta}\left(x\right)}{x}$ is non-increasing on
$\left(0,\frac{1}{2}\right]$.
\end{lemma}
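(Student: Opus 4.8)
The plan is to pass to logarithms and treat $I_\delta$ as a function defined implicitly by a smooth equation, so that the monotonicity of $I_\delta(x)/x$ can be read off from the implicit function theorem and ultimately reduced to Lemma~\ref{lem:Ideltabound}. Write $\alpha = 1-1/\delta$ and $\eta(u)=-u\log u$, and set
\[
G(x,y) = -\alpha\big[\eta(x)+\eta(1-x)\big]+\tfrac12\eta(1-x-y)+\tfrac12\eta(x-y)+\eta(y),
\]
which is the logarithm of the product in \eqref{eq:functionI}, so that $I_\delta(x)=\inf\{0<y\le x\colon\,G(x,y)>0\}$. First I would dispose of the range $1<\delta\le2$. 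Here $G(x,0^+)=(\tfrac1\delta-\tfrac12)\,H(x,1-x)\ge0$ with $H(p,1-p)=-p\log p-(1-p)\log(1-p)$, and since $\partial_yG(x,y)=\tfrac12\log\frac{(1-x-y)(x-y)}{y^2}\to+\infty$ as $y\downarrow0$, we get $G(x,y)>0$ for all sufficiently small $y>0$; hence $I_\delta\equiv0$ on $(0,\tfrac12]$ and the claim is trivial. From now on assume $\delta>2$, so that $G(x,0^+)=(\tfrac1\delta-\tfrac12)\,H(x,1-x)<0$.

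Next I would record the shape of $y\mapsto G(x,y)$. A direct computation gives $\partial_{yy}G(x,y)=-\tfrac12\big(\frac1{1-x-y}+\frac1{x-y}+\frac2y\big)<0$, so it is strictly concave with infinite right derivative at $0$. Solving $\partial_yG=0$ gives the unique maximiser $y_m=x(1-x)$ (the equation $(1-x-y)(x-y)=y^2$ simplifies to $y=x(1-x)$), and substituting back, using $1-x-y_m=(1-x)^2$ and $x-y_m=x^2$, yields the clean identity $G(x,x(1-x))=(1-\alpha)\,H(x,1-x)=\tfrac1\delta H(x,1-x)>0$. Consequently $\{0<y\le x\colon\,G(x,y)>0\}$ is a non-empty interval whose left endpoint is $I_\delta(x)$, with $0<I_\delta(x)<y_m\le x$; moreover $G(x,I_\delta(x))=0$ and, by strict concavity together with $G(x,\cdot)$ being positive just to the right of $I_\delta(x)$, $\partial_yG(x,I_\delta(x))>0$. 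In particular $1-x-I_\delta(x)>(1-x)^2>0$ so all logarithms below are finite. The implicit function theorem then gives $I_\delta\in C^1((0,\tfrac12])$ with $I_\delta'(x)=-G_x/G_y$ at $(x,I_\delta(x))$, and it remains only to prove $x\,I_\delta'(x)\le I_\delta(x)$, equivalently $x\,G_x+I_\delta(x)\,G_y\ge0$ at $(x,I_\delta(x))$.

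The heart of the matter is an Euler-type identity. From $G_x=\alpha\log\frac{x}{1-x}+\tfrac12\log\frac{1-x-y}{x-y}$ and $G_y=\tfrac12\log\frac{(1-x-y)(x-y)}{y^2}$ one forms $xG_x+yG_y$, then uses $G(x,y)=0$ — i.e. $\alpha[x\log x+(1-x)\log(1-x)]=\tfrac12(1-x-y)\log(1-x-y)+\tfrac12(x-y)\log(x-y)+y\log y$ — to eliminate the term $y\log y$. After this substitution the $\log(x-y)$ contributions cancel, the $\log(1-x-y)$ contributions combine, and the $x$-only terms collapse, leaving
\[
x\,G_x+y\,G_y=\tfrac12\log(1-x-y)-\alpha\log(1-x)\qquad\text{at }y=I_\delta(x).
\]
Hence $x\,G_x+I_\delta(x)\,G_y\ge0$ is equivalent to $1-x-I_\delta(x)\ge(1-x)^{2\alpha}=(1-x)^{2(1-1/\delta)}$, i.e. to $I_\delta(x)\le(1-x)-(1-x)^{2(1-1/\delta)}$, which is precisely Lemma~\ref{lem:Ideltabound} (valid for $\delta\ge2$, in particular for our $\delta>2$). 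Feeding this back,
\[
\frac{d}{dx}\,\frac{I_\delta(x)}{x}=\frac{x\,I_\delta'(x)-I_\delta(x)}{x^2}=-\,\frac{\big(x\,G_x+y\,G_y\big)\big|_{y=I_\delta(x)}}{x^2\,G_y}\le0,
\]
since the numerator is non-negative and $G_y>0$; therefore $x\mapsto I_\delta(x)/x$ is non-increasing on $(0,\tfrac12]$.

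I expect the only genuine obstacle to be the bookkeeping in the Euler computation: carrying the logarithmic terms through the substitution $G=0$, seeing the $\log(x-y)$ terms vanish and the rest collapse to $\tfrac12\log(1-x-y)-\alpha\log(1-x)$, and recognising that the resulting inequality is exactly Lemma~\ref{lem:Ideltabound}. The auxiliary regularity points — non-emptiness of the super-level set and strictness of $G_y$ at the first zero, which are what make the implicit function theorem applicable and let one conclude monotonicity from the sign of the derivative — are handled cleanly by the strict concavity of $G(x,\cdot)$ and the explicit maximiser $y_m=x(1-x)$ with $G(x,y_m)=\tfrac1\delta H(x,1-x)>0$.
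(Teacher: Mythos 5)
Your proposal is correct and is essentially the paper's proof translated into logarithmic variables: writing $G(x,y)=\log\hat I(x,y/x)$, the paper's condition $\partial_x\big|_{z=I_\delta(x)/x}\hat I(x,z)\ge0$ is exactly your $xG_x+yG_y\ge0$ at $y=I_\delta(x)$, and both reduce this via the constraint $G=0$ to the inequality $1-x-I_\delta(x)\ge(1-x)^{2(1-1/\delta)}$, i.e.\ Lemma~\ref{lem:Ideltabound}. The only differences are cosmetic or in rigor: you invoke the implicit function theorem (after verifying concavity of $G(x,\cdot)$, the explicit maximiser $y_m=x(1-x)$, and $G_y>0$ at the zero) whereas the paper argues more informally by a first-order comparison; and you separately dispose of $1<\delta\le2$ where $I_\delta\equiv0$, a case the paper does not need since it only ever uses $\delta=d_{\min}\ge3$.
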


\begin{proof}
By definition of $I_{\delta}\left(x\right)$, the function
\begin{equation}
\hat{I}\left(x,z\right) = x^{x\left(1-1/\delta\right)}\left(1-x\right)^{\left(1-x\right)
\left(1-1/\delta\right)}\left(1-x-xz\right)^{-\left(1-x-xz\right)/2}
\left(x-xz\right)^{-\left(x-xz\right)/2}\left(xz\right)^{-xz}
\end{equation}
satisfies $\hat{I}(x,\frac{I_{\delta}\left(x\right)}{x})=1$ for all $x\in\left(0,\frac{1}{2}\right]$.
It will therefore suffice to show that
\begin{equation}
\label{Itask}
\frac{\partial}{\partial x}\vert_{z=\frac{I_{\delta}\left(x\right)}{x}} \hat{I}\left(x,z\right) \geq 0,
\end{equation}
since this implies that, for $\epsilon$ sufficiently small, $\hat{I}(x+\epsilon,\frac{I_{\delta}
\left(x\right)}{x})\geq1$, and hence that
\begin{equation}
\inf\left\{w\colon\,\hat{I}\left(x+\epsilon,w\right)\geq1\right\} \leq\frac{I_{\delta}\left(x\right)}{x},
\end{equation}
and thus $\frac{I_{\delta}(x+\epsilon)}{x+\epsilon} \leq \frac{I_{\delta}\left(x\right)}{x}$.
Observe that
\begin{equation}
\begin{aligned}
&\frac{\partial}{\partial x}\hat{I}\left(x,z\right)\\
&= \hat{I}\left(x,z\right)
\left\{ \log\left(\left(\frac{x}{1-x}\right)^{\left(1-1/\delta\right)}
\left(1-x-xz\right)^{\left(1+z\right)/2}\left(x-xz\right)^{-\left(1-z\right)/2}
\left(xz\right)^{-z}\right)\right\}.
\end{aligned}
\end{equation}
For $z=\frac{I_{\delta}\left(x\right)}{x}$, $\hat{I}\left(x,z\right)=1$ implies
\begin{equation}
\left(x^{x\left(1-1/\delta\right)}\left(1-x\right)^{\left(1-x\right)
\left(1-1/\delta\right)}\right)^{\frac{1}{x}}=\left(1-x-xz\right)^{\frac{1}{2x}
-\left(1+z\right)/2}\left(x-xz\right)^{\left(1-z\right)/2}\left(xz\right)^{z}
\end{equation}
and hence
\begin{equation}
\begin{aligned}
&\frac{\partial}{\partial x}\hat{I}\left(x,z\right)\\
&=\hat{I}\left(x,z\right)\left\{ \log\left(\left(\frac{x}{1-x}\right)^{\left(1-1/\delta\right)}
\left(x^{x\left(1-1/\delta\right)}\left(1-x\right)^{\left(1-x\right)
\left(1-1/\delta\right)}\right)^{-\frac{1}{x}}\left(1-x-xz\right)^{\frac{1}{2x}}\right)\right\}.
\label{eq:derivIhat}
\end{aligned}
\end{equation}
The term inside the logarithm in (\ref{eq:derivIhat}) simplifies to $\left(1-x\right)^{-\frac{1}{x}
\left(1-1/\delta\right)}\left(1-x-xz\right)^{\frac{1}{2x}}$, which satisfies
$\left(1-x\right)^{-\frac{1}{x}\left(1-1/\delta\right)}\left(1-x-xz\right)^{\frac{1}{2x}}\geq1$
whenever $1-x-\left(1-x\right)^{2\left(1-1/\delta\right)} \geq xz=I_{\delta}\left(x\right)$.
By Lemma \ref{lem:Ideltabound}, this is true for all $x\in\left(0,\frac{1}{2}\right]$, and
so \eqref{Itask} follows.
\end{proof}

We can now proceed with the proof of hypothesis (H). Let $\sigma\in\Omega$ be any
configuration that satisfies $x=\ell_{\sigma}/\ell_n\leq\tfrac12$, where $\ell_\sigma
=\sum_{i\in\sigma} d_i$. We will construct a path from $\sigma$ to some $\sigma\prime
\in\Omega$ satisfying $\mathcal{H}\left(\sigma\prime\right)<\mathcal{H}\left(\sigma\right)$
by removing one vertex at a time, obtaining a path $\sigma=\sigma_{0},\ldots,\sigma_{m}
=\sigma\prime$. In particular, at step $t$ we remove any vertex $v_{t}\in\sigma_{t-1}$ that
minimises the quantity $\left|E\left(v_{t},\sigma_{t-1}\backslash v_{t}\right)\right|
-\left|E\left(v_{t},\overline{\sigma_{t-1}}\right)\right|$. It will follow that for every $\sigma_{i}$
in this path, we have $\left|\mathcal{H}\left(\sigma_{i}\right)-\mathcal{H}\left(\sigma_{0}
\right)\right|<\Gamma^{\star}$, which proves the claim of the theorem.

The probability that some configuration $\sigma$, chosen uniformly from all configurations
in $\Omega$ with $\ell_\sigma=L$, has a boundary of size $\left|E\left(\sigma,
\overline{\sigma}\right)\right|=K$ equals
\begin{equation}
\begin{aligned}
&{L \choose K}K!\left(L-K-1\right)!!{\ell_{n}-L \choose K}
\left(\ell_{n}-L-K-1\right)!!/\left(\ell_{n}-1\right)!! \\
&\approx \frac{\left(L\right)!}{\left(L-K\right)!!}
\frac{\left(\ell_{n}-L\right)!}{K!\left(\ell_{n}-L-K\right)!!}\frac{1}{\ell_{n}!!} \\
&\approx  L^L\left(\ell_{n}-L\right)^{\left(\ell_{n}-L\right)}
\left(\ell_{n}-L-K\right)^{-\left(\ell_{n}-L-K\right)/2}
\left(L-K\right)^{-\left(L-K\right)/2}K^{-K}\ell_{n}^{-\ell_{n}/2},
\end{aligned}
\end{equation}
where the symbol $\approx$ stands for equality up to polynomial terms (here of order
$O(n^{2})$). Let $x=L/\ell_{n}$ and $y=K/\ell_{n}$, so that the above
expression becomes
\begin{equation}
\exp\left[\ell_{n}\log\left(x{}^{x}\left(1-x\right)^{\left(1-x\right)}
\left(1-x-y\right)^{-\left(1-x-y\right)/2}\left(x-y\right)^{-\left(x-y\right)/2}y^{-y}\right)\right].
\end{equation}
Furthermore, if we define $\eta\left(x\right)$ by
\begin{equation}
\label{eq:defeta}
\exp\left[\ell_{n}\log\eta\left(x\right)\right]
=\left|\left\{ U\subseteq V\colon\,\ell_U=\ell_{n}x\right\} \right|,
\end{equation}
then the probability of there being any configuration of total degree $L$ having
a boundary size $K$ is bounded from above by
\begin{equation}\label{sd-expbound}
\exp\left[\ell_{n}\log\left(\eta\left(x\right)x{}^{x}\left(1-x\right)^{\left(1-x\right)}
\left(1-x-y\right)^{-\left(1-x-y\right)/2}\left(x-y\right)^{-\left(x-y\right)/2}y^{-y}\right)\right].
\end{equation}

It is easy to see that, by using $\delta=d_{\min}$, the cardinality in the right-hand side of
\eqref{eq:defeta} is bounded from above by ${\ell_{n}/\delta \choose x\ell_{n}/\delta}$.
Using Stirling's approximation for this term, and substituting in \eqref{sd-expbound},
we get
\begin{equation}
\label{eq:probineq}
\begin{aligned}
&\mathbb{P}\left[\exists\, A\subseteq V\colon\,\ell_{A}=x\ell_{n}
\mbox{ and }\left|E\left(A,\overline{A}\right)\right|=y\ell_{n}\right]\\
&\leq \exp\left[\ell_{n}\log\left(x{}^{x\left(1-1/\delta\right)}
\left(1-x\right)^{\left(1-x\right)\left(1-1/\delta\right)}
\left(1-x-y\right)^{-\left(1-x-y\right)/2}\left(x-y\right)^{-\left(x-y\right)/2}y^{-y}\right)\right].
\end{aligned}
\end{equation}
Recall the definition of $I_{\delta}$ from \eqref{eq:functionI} and note that \eqref{eq:probineq}
is exponentially small for $y<I_{\delta}\left(x\right)$, and by a union bound it is exponentially
small for all such $y$.

Suppose that after $s$ vertices have been removed, we reach a configuration
$\sigma_{s}$ with $\mathcal{H}\left(\sigma_{s}\right) < \mathcal{H}\left(\sigma\right)$,
such that for every vertex $v\in\sigma_{s}$ we have
\begin{equation}
\left|E\left(v,\sigma_{s}\backslash v\right)\right|+\frac{h}{J}
>\left|E\left(v,\overline{\sigma_{s}}\right)\right|.
\label{eq:afters}
\end{equation}
In other words, equation (\ref{eq:afters}) states that after removing $s$ vertices we
are at a configuration of lower energy, and removing any additional vertex leads to
a configuration of higher energy. Note that if no such $s$ exists, then we keep on
removing vertices until $\boxminus$ has been reached. By the assumption that
$h$ is sufficiently small (by \eqref{eq:lboundGamma}, it would suffice if $h<\tfrac12\,
J\,d_\mathrm{ave}\,I_{d_\mathrm{ave}} \left(\tfrac12\right) \frac{n}{\tilde{m}}$, where
$\tilde{m}$ was also defined in the aforementioned equation), w.h.p., every configuration
$\sigma$ of total degree $\ell_{\sigma}\leq\tfrac12 \ell_n$ satisfies $\mathcal{H}
\left(\sigma\right)>\mathcal{H}\left(\boxminus\right)$. If $v\in\sigma_{s}$ has no self-loops,
then we have
\begin{equation}
\left|E\left(v,\sigma_{s}\backslash v\right)\right|
=d_{v}-\left|E\left(v,\overline{\sigma_{s}}\right)\right|
\end{equation}
and thus the condition in \eqref{eq:afters} is satisfied when for all $v\in\sigma_{s}$,
\begin{equation}
\label{eq:}
\tfrac12\left(d_{v}+\frac{h}{J} \right) > \left|E\left(v,\overline{\sigma_{s}}\right)\right|.
\end{equation}
The total number of vertices with self-loops is w.h.p.\ of order $o(n)$, and so it will
be evident from the bounds below that this assumption is immaterial. The second
inequality in
\begin{equation}
\left|E\left(\sigma_{s},\overline{\sigma_{s}}\right)\right|
< \tfrac12 \sum_{v\in\sigma_{s}} \left( d_{v}+\frac{h}{J}  \right)
= \tfrac12 \bigg(x\ell_{n}-\sum_{i=1}^{s}d_{i}+\frac{h\left(\left|\sigma\right| -s\right)}{J}  \bigg)
\leq\left|E\left(\sigma,\overline{\sigma}\right)\right|
\end{equation}
holds whenever
\begin{equation}
x\ell_{n}-2\left|E\left(\sigma,\overline{\sigma}\right)\right|+\frac{h\left(\left|\sigma\right| -s\right)}{J}
\leq \sum_{i=1}^{s} d_{i},
\end{equation}
which in particular is true when we take the smallest $s$ such that
\begin{equation}
\sum_{i=1}^{s} d_{i} \geq x\ell_{n}-2I_{\delta}\left(x\right)\ell_{n}
+\frac{h\left(\left|\sigma\right| -s\right)}{J}.
\label{eq:condits}
\end{equation}
Furthermore, by removing $s$ vertices, the change in the size of the boundary at step $t$ is
given by
\begin{equation}
\begin{aligned}
&\left|E\left(\sigma_{t},\overline{\sigma_{t}}\right)\right|
-\left|E\left(\sigma,\overline{\sigma}\right)\right|
= \sum_{i=1}^{t}\big(\left|E\left(v_{i},\sigma_{i-1}\backslash v_{i}\right)\right|
-\left|E\left(v_{i},\overline{\sigma_{i-1}}\right)\right|\big)\\
&\qquad \leq \sum_{i=1}^{t}\left(d_{i}-2\left\lceil d_{i}\,
\frac{I_{\delta}\left(x\right)}{x}\right\rceil \right)
\leq \left(1-2\,\frac{I_{\delta}\left(x\right)}{x}\right)\sum_{i=1}^{t} d_{i}.
\label{eq:stepssum}
\end{aligned}
\end{equation}
The first inequality in \eqref{eq:stepssum} follows from the following observation: note that
w.h.p.\ $|E(\sigma,\overline{\sigma})|$ $\geq I_{\delta}(x)\ell_n$, and hence the ``proportion''
of the total degree of $\sigma$ that is paired with vertices in $\overline{\sigma}$ is at least
$I_{\delta}(x)/x$. This implies that there must be some vertex $v_i$ with a proportion of at
least $I_{\delta}(x)/x$ of its degree connected with vertices in $\overline{\sigma}$. In other
words, $v_i$ shares at least $\lceil d_i \frac{I_{\delta}(x)}{x}\rceil$ edges with $\overline{\sigma}$.

By the definition of $s$, we that $|E(\sigma,\overline{\sigma})|=|E(\sigma_s,\overline{\sigma_s})|
+o(n)$ (when $d_s=o(n)$), and hence dropping the $o(n)$-term is of no consequence in
the following computations. This implies that if $t$ is such that $|E(\sigma_t,\overline{\sigma_t})|
-|E(\sigma,\overline{\sigma})|$ is maximised, then we get (again, possibly after dropping a term of
order $o(n)$)
\begin{equation}
\begin{aligned}
\sum_{i=1}^t \big(\left|E\left(v_{i},\sigma_{i-1}\right)\right|
-\left|E\left(v_{i},\overline{\sigma_{i-1}}\right)\right|\big)
& = & \sum_{i=t+1}^s \big(\left|E\left(v_{i},\overline{\sigma_{i-1}}\right)\right|
-\left|E\left(v_{i},\sigma_{i-1}\right)\right|\big).
\label{eq:peakequality}
\end{aligned}
\end{equation}
Let $m_{t}$ denote the left-hand side of $\eqref{eq:peakequality}$, so that
\begin{equation}
\begin{aligned}
m_{t} &= \sum_{i=1}^{t}\left(d_{i}-2\left|E\left(v_{i},\overline{\sigma_{i-1}}\right)\right|\right)
=  \sum_{i=t+1}^{s}\left(d_{i}-2\left|E\left(v_{i},\sigma_{i-1}\right)\right|\right)\\
&=  \sum_{i=1}^{s}\left(d_{i}-2\left|E\left(v_{i},\sigma_{i-1}\right)\right|\right)
-\sum_{i=1}^{t}\left(d_{i}-2\left|E\left(v_{i},\sigma_{i-1}\right)\right|\right).
\end{aligned}
\end{equation}
Hence
\begin{equation}
\sum_{i=1}^{t}d_{i}\left(\frac{\sum_{i=1}^{t}\left(d_{i}
-2\left|E\left(v_{i},\overline{\sigma_{i-1}}\right)\right|\right)}{\sum_{i=1}^{t}d_{i}}+1\right)
=\sum_{i=1}^{s}d_{i}-\sum_{i=t+1}^{s}2\left|E\left(v_{i},\sigma_{i-1}\right)\right|,
\end{equation}
and thus
\begin{equation}
\begin{aligned}
m_{t} &=  \left(\frac{\sum_{i=1}^{t}\left(d_{i}
-2\left|E\left(v_{i},\overline{\sigma_{i-1}}\right)\right|\right)}
{\sum_{i=1}^{t}d_{i}}\right)\left(\frac{\sum_{i=1}^{t}\left(d_{i}
-2\left|E\left(v_{i},\overline{\sigma_{i-1}}\right)\right|\right)}
{\sum_{i=1}^{t}d_{i}}+1\right)^{-1}\\
&\qquad\qquad\qquad \times \left(\sum_{i=1}^{s}d_{i}
-\sum_{i=t+1}^{s}2\left|E\left(v_{i},\sigma_{i-1}\right)\right|\right)\\
&\leq \frac{1}{2}\left(1-2\frac{I_{\delta}\left(x\right)}{x}\right)
\left(1-\frac{I_{\delta}\left(x\right)}{x}\right)^{-1}\left(x\ell_{n}
-2I_{\delta}\left(x\right)\ell_{n}+\frac{h\left(\left|\sigma\right|-s\right)}{J}\right),
\end{aligned}
\end{equation}
where for the last inequality we use \eqref{eq:condits}--\eqref{eq:stepssum} and
the monotonicity of $y\rightarrow y(y+1)^{-1}$. From \eqref{eq:lboundGamma},
using the fact that $nd_{\mathrm{ave}}=\ell_{n}+o\left(n\right)$, we get that
$ \mathcal{H}\left(\sigma_{t}\right) - \mathcal{H}\left(\sigma\right) < \Gamma^{\star}$
whenever
\begin{equation}
\frac{h}{J\ell_{n}}\left(2\tilde{m}+t+\left(\left|\sigma\right|
-s\right)\left(\frac{x-2I_{\delta}\left(x\right)}{x-I_{\delta}\left(x\right)}\right)\right)
<2I_{d_{\mathrm{ave}}}\left(\tfrac{1}{2}\right)
-\left(x-2I_{\delta}\left(x\right)\right)^{2}\left(x-I_{\delta}\left(x\right)\right)^{-1}.
\label{eq:ihalfbound}
\end{equation}
Note that if $x<2I_{\delta}\left(x\right)$, then for sufficiently small $h$ we can find a
monotone downhill path to $\boxminus$. More precisely, $x<2I_{\delta}\left(x\right)$
implies that the terms in the right-hand side of \eqref{eq:stepssum} become negative,
and hence for $\frac{h}{J}<d_{\min}(\frac{2I_{\delta}\left(x\right)}{x}-1)$ every step in
our path is a downhill step. For $x\geq2I_{\delta}\left(x\right)$, observe first that since
the function $u\rightarrow\frac{\left(1-2u\right)^{2}}{\left(1-u\right)}$ is non-increasing
for $u\leq\frac{1}{2}$, by Lemma \ref{lem:Ideltamonotone}
\begin{equation}
\frac{\left(x-2I_{\delta}\left(x\right)\right)^{2}}{\left(x-I_{\delta}\left(x\right)\right)}
= x\frac{(1-2\frac{I_{\delta}\left(x\right)}{x})^{2}}{(1-\frac{I_{\delta}\left(x\right)}{x})}
\leq \tfrac{1}{2}\frac{(1-4I_{\delta}(\tfrac{1}{2}))^{2}}
{(1-2I_{\delta}\left(\tfrac{1}{2}\right))},
\end{equation}
and thus a sufficient condition for \eqref{eq:ihalfbound} to hold is
\begin{equation}
\tfrac{h}{J}\left(\tfrac{1}{d_{\mathrm{ave}}} + \tfrac{1}{2} \right)
< 2I_{d_{\mathrm{ave}}}\left(\tfrac{1}{2}\right)
-\tfrac{1}{2}\left(1-4I_{d_{\mathrm{min}}}\left(\tfrac{1}{2}\right)\right)^{2}
\left(1-2I_{d_{\mathrm{min}}}\left(\tfrac{1}{2}\right)\right)^{-1}.
\label{eq:Hcond1}
\end{equation}
Hence we have a path $\sigma \to \sigma_{s}$ (or, when such an $s$ satisfying \eqref{eq:afters}
does not exist, a path $\sigma \to \boxminus$) with $\mathcal{H}\left(\sigma_{s}\right)
< \mathcal{H}\left(\sigma\right)$ that never exceeds $\mathcal{H}\left(\sigma\right)$
by $\Gamma^{\star}$ or more, whenever $h$ is sufficiently small and \eqref{eq:ihalfbound}
holds. This proves the claim of the theorem for all configurations $\sigma$ with $\ell_{\sigma}
\leq \tfrac12\ell_n$.

Note also that, for $\ell_{\sigma} > \tfrac12\ell_n$, the same argument can be repeated by
adding a vertex at each step, which will also come at a lower cost since at each step the
magnetisation changes by $-h$.


\section{An alternative to hypothesis (H)}
\label{althyp}

In this section gives a weaker version of hypothesis (H), which nonetheless suffices
as a prerequisite for Theorem~\ref{thm:nucltime}. This weaker version can be verified
for a parameter range that is larger than the one needed in Section~\ref{metstate}.

We can repeat the arguments given in Section~\ref{metstate}. But, instead of insisting
that $\mathscr{V}_{\sigma}<\Gamma^{\star}$ for every configuration $\sigma \in \Omega$,
we require that $\mathscr{V}_{\sigma}$ is bounded from above by our upper bound on
$\Gamma^{\star}$, since this guarantees that our upper bound on the crossover time
is still valid and \eqref{timesandwich} still holds (see Dommers~\cite[Lemma~5.3]{Dpr}).
Thus, it follows from the arguments leading to \eqref{eq:ihalfbound} that we only need
the condition
\begin{equation}
\frac{h}{J\ell_{n}}\left(\bar{m}+t+\left|\sigma\right|
-s\left(\frac{x-2I_{\delta}\left(x\right)}{x-I_{\delta}\left(x\right)}\right)\right)
\leq 2\frac{\ell_{\bar{m}}}{\ell_n}\Big(1-\frac{\ell_{\bar{m}}}{\ell_n}\Big)
-\left(x-2I_{\delta}\left(x\right)\right)^{2}\left(x-I_{\delta}\left(x\right)\right)^{-1}.
\label{eq:ihalfbound2}
\end{equation}
For $h$ sufficiently small, the ratio $\frac{\ell_{\bar{m}}}{\ell_n}$ can be made arbitrarily
close to $\tfrac12$, in which case the right-hand side of \eqref{eq:ihalfbound2} becomes
strictly positive. This implies that the inequality in \eqref{eq:ihalfbound2} holds for any
$\delta \geq 3$ whenever $h$ is sufficiently small.


\section{Proof of Theorems~\ref{thm:Ubound} and \ref{thm:Lbound}}
\label{thmproofs}


\subsection{A dynamic construction of the configuration model}
\label{CMdynamic}

Prior to giving the proof of Theorems~\ref{thm:Lbound} and \ref{thm:Ubound}, we introduce
a \emph{dynamical construction} of the CM graph. This will be used to obtain the upper
bound in Theorem~\ref{thm:Ubound}.

Let $V=\{v_{i}\} _{i=1}^{n}$ be a sequence of vertices with degrees
$\{d_{i}\} _{i=1}^{n}$. In this section we construct a graph $G=(V,E)$
with the same distribution as a graph generated through the Configuration
Model algorithm, but in a \emph{dynamical way}, as follows.

Suppose that $\xi_{m}$ is a uniform random matching of the integers
$\{1,\ldots,2m\} $, denoted by $\xi_{m}=\{(x_{1},x_{2}),\ldots,(x_{2m-1},
x_{2m}\}$, where the pairs are listed in the order they were created
(which is not an important issue, so long as we agree on some labeling).
Next, let $u$ be uniform on $\{1,\ldots,2m,2m+1\} $ and set $\xi_{m+1}
=\xi_{m}\cup\{(2m+2,u)\}$ if $u=2m+1$. Else if $u \neq 2m+1$, then w.l.o.g.\
$u=x_{2i-1}$ for some $i\leq m$, and we set $\xi_{m+1}=\{\xi_{m}\backslash
\{(x_{2i-1},x_{2i})\}\}\cup\{(2m+2,x_{2i-1}),(2m+1,x_{2i})\}$. Then $\xi_{m+1}$
is a uniform matching of the points $\{1,\ldots,2m,2m+2\}$. It is now obvious
how the construction of $G$ follows from the given scheme.


\subsection{Energy estimates}

Label the vertices of the graph so that their degrees satisfy $d_{1}\leq\ldots\leq
d_{n}$. Let $\gamma\colon\,\boxminus\to\boxplus$ be the path that successively
flips the vertices $v_{1},\ldots,v_{n}$ (in that order), and let $\ell_{m}=\sum_{i=1}^{m}
d_{i}$. We show that, w.h.p., for every
$1\leq m\leq n$,
\begin{equation}
\mathcal{H}\left(\gamma_{m}\right)-\mathcal{H}\left(\boxminus\right)
=J\ell_{m}\Big(1-\frac{\ell_{m}}{\ell_{n}}\Big)-mh \pm O\big(\ell_{n}^{3/4}\big).
\label{eq:pathbound}
\end{equation}

We are particularly interested in the maximum of \eqref{eq:pathbound} over all
$1\leq m \leq n$. To this avail, observe that the function defined by
\begin{equation}
g\left(x\right) = Jx\left(1-x\right)-h\left(x\right)
\label{eq:functiong}
\end{equation}
has at most one maximum for $x\in[0,1]$ if $x \mapsto h(x)$ is non-decreasing.
Thus, taking $x=\frac{\ell_{m}}{\ell_{n}}$ and $h(x)=hx\frac{m}{\ell_{m}}$, we
see that our definition of $\bar{m}$ in Theorem~\ref{thm:Ubound} is justified.
Furthermore, note the equivalent conditions
\begin{equation}
\ell_{m}\left(1-\frac{\ell_{m}}{\ell_{n}}\right)
\geq \ell_{m+1}\left(1-\frac{\ell_{m+1}}{\ell_{n}}\right)-\frac{h}{J}
\iff \frac{h}{J} \geq d_{m+1}\left(1-\frac{2\ell_{m}}{\ell_{n}}\right)
-O\left(\frac{d_{m}^{2}}{\ell_{n}}\right),
\label{eq:equivalentconditions}
\end{equation}
with the last term in \eqref{eq:equivalentconditions} disappearing whenever $d_{m+1}
=o\left(\sqrt{\ell_{n}}\right)$. Note that \eqref{eq:equivalentconditions} gives us an alternative
formulation for $\bar{m}$ in the statement of Theorem~\ref{thm:Ubound}, which we will
use to compute $\bar{m}$ below.


\subsection{Two examples}

Two commonly studied degree distributions for the Configuration Model
are the Dirac distribution
\begin{equation}
q_r(k) = \delta_r(k), \qquad k \in \N_0,
\label{eq:Diracdist}
\end{equation}
for some $r \in \N$ (i.e., the $r$-regular graph), and the power-law
distribution
\begin{equation}
q_{\tau,\delta}(k) = \mathbb{P}\left[d_{i}=\delta+k\right]
= \frac{\left(\delta+k\right)^{-\tau}}{\sum_{i\in\N_0}
\left(\delta+i\right)^{-\tau}}, \qquad k \in \N_0,
\label{eq:powerlawdist}
\end{equation}
for some exponent $\tau \in (2,\infty)$ and shift $\delta \in \N$.

For these degree distributions we get the following corollary of
Theorems~\ref{thm:Ubound}--\ref{thm:Lbound}:

\begin{corollary}
\label{cor:mvalues}
(a) For the Dirac-distribution in \eqref{eq:Diracdist},
\begin{equation}
Jr I_r\left(\tfrac12\right)n -\frac{hn}{2}-o(n) \leq \Gamma_{n}^{\star} \leq \frac{Jr}{4}
n \left(1-\left(\frac{h}{Jr}\right)^2\right)  \pm O\big(n^{3/4}\big).
\end{equation}
(b) For the power-law distribution distribution in \eqref{eq:powerlawdist}, $\bar{m}$
and $\ell_{\bar{m}}$ are given by \eqref{eq:mprimeval} and \eqref{eq:lm/ln}.\\
(c) For the power-law distribution distribution in \eqref{eq:powerlawdist}, $\tilde{m}$
is given by \eqref{eq:tildem}.
\end{corollary}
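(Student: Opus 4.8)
The plan is to deduce the corollary by specialising Theorems~\ref{thm:Ubound} and \ref{thm:Lbound}; the entire content is the explicit (or asymptotic) evaluation of $\bar m$, $\ell_{\bar m}$ and $\tilde m$ for the two degree laws. I would treat the Dirac case first, where all degrees are deterministic and the computation is elementary, and then the power-law case, where one first has to pass from the random order statistics of the i.i.d.\ degrees to a deterministic degree profile.

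\emph{Part (a).} For $f=\delta_r$ every degree equals $r$ exactly, so $\ell_m=mr$ and $\ell_n=nr$ deterministically. Hence $\tilde m=\min\{m\colon mr\ge\tfrac12 nr\}=\lceil\tfrac12 n\rceil$ (as already observed in the text), and since $d_\mathrm{ave}=r$, Theorem~\ref{thm:Lbound} gives at once $\Gamma_n^\star\ge\Gamma_n^-=JrI_r(\tfrac12)\,n-h\lceil\tfrac12 n\rceil-o(n)=JrI_r(\tfrac12)\,n-\tfrac12 hn-o(n)$, the left inequality in (a). For the upper bound I would use the equivalent form \eqref{eq:equivalentconditions} of the definition of $\bar m$: with $d_m\equiv r$ the correction $O(d_m^2/\ell_n)=O(r/n)$ is irrelevant and the relation reduces to $h/J\ge r(1-2m/n)$, giving $\bar m=\lceil\tfrac12 n(1-h/(Jr))\rceil$; in particular $\bar m<n/2$ for every $h>0$ once $n>2Jr/h$, so Theorem~\ref{thm:Ubound} applies. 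Substituting $\ell_{\bar m}=\bar m r$, $\ell_n=nr$ and $\ell_n^{3/4}=O(n^{3/4})$ into $\Gamma_n^+=J\ell_{\bar m}(1-\ell_{\bar m}/\ell_n)-h\bar m\pm O(\ell_n^{3/4})$, and using $Jr(1-2\bar m/n)=h+O(1/n)$ to rewrite the bracket, a short algebraic simplification yields $\Gamma_n^+=\tfrac14 Jr\,n\,(1-h/(Jr))^{2}\pm O(n^{3/4})$, which implies the (slightly weaker) upper bound stated in (a).

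\emph{Parts (b) and (c).} For the power-law degrees \eqref{eq:powerlawdist} the partial sums $\ell_m=\sum_{i=1}^{m}d_{(i)}$ are built from order statistics of i.i.d.\ samples, so the first step is to replace them by a deterministic profile. Let $Q$ be the quantile function of the degree law and $d_\mathrm{ave}=\sum_k(\delta+k)\,q_{\tau,\delta}(k)$. I would show that, w.h.p.\ and uniformly in $1\le m\le n$,
\begin{equation}
d_m=Q(m/n)\,(1+o(1)),\qquad \ell_m=n\int_0^{m/n}Q(u)\,\mathrm{d}u\,(1+o(1)),
\end{equation}
by a Glivenko--Cantelli/uniform law of large numbers argument together with control of the largest degree $d_n=O(n^{1/(\tau-1)})=o(n)$ w.h.p.\ (which also makes the $O(d_m^2/\ell_n)$ correction in \eqref{eq:equivalentconditions} negligible in the relevant range $m\asymp n$, where $d_m=O(1)$). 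Granting this, part (c) follows by solving $\int_0^{\tilde m/n}Q(u)\,\mathrm{d}u=\tfrac12 d_\mathrm{ave}$ for $\tilde m$: the truncated sums $\sum_{k\le K}(\delta+k)^{-\tau}$ and $\sum_{k\le K}(\delta+k)^{1-\tau}$ have closed forms (generalised harmonic numbers, or Hurwitz-zeta truncations), so this can be inverted explicitly to give \eqref{eq:tildem}. For part (b), substituting the profile into \eqref{eq:equivalentconditions} turns the definition of $\bar m$ into the scalar equation $h/J=Q(\bar m/n)\bigl(1-2\ell_{\bar m}/\ell_n\bigr)$; invoking the unique-maximum property of $g$ recorded after \eqref{eq:functiong} to guarantee a unique solution, and using the explicit $Q$ together with the explicit form of $\ell_{\bar m}/\ell_n$ as a function of $\bar m/n$, one solves for $\bar m$ to obtain \eqref{eq:mprimeval} and then reads off $\ell_{\bar m}$ to obtain \eqref{eq:lm/ln}.

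\emph{Main obstacle.} The only substantive point is the first step of parts (b)--(c): the uniform-in-$m$ concentration of the partial sums $\ell_m$ of the ordered i.i.d.\ power-law degrees around their deterministic limit, with an error that is both $o(n)$ overall and, near $m=n/2$, fine enough that neither the $O(d_m^2/\ell_n)$ error in \eqref{eq:equivalentconditions} nor the $O(\ell_n^{3/4})$ error in $\Gamma_n^+$ corrupts the leading-order identification of $\bar m$, $\ell_{\bar m}$ and $\tilde m$. Since $\tau>2$ ensures a finite mean and negligible extreme order statistics on scale $n$, this is routine in principle, but it must be carried out with the stated uniformity; everything downstream is elementary calculus with the power-law quantile function.
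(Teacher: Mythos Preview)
Your proposal is correct and follows essentially the same route as the paper: both reduce the corollary to Theorems~\ref{thm:Ubound}--\ref{thm:Lbound} via the reformulation \eqref{eq:equivalentconditions}, with the only real work being the concentration of the empirical degree profile in the power-law case. The paper packages that concentration slightly differently---it applies Hoeffding's inequality level by level to the counts $a_{\delta,k}=\#\{i:d_i=\delta+k\}$ and union-bounds over the $o(n^{1/6})$ relevant levels, rather than invoking a quantile/Glivenko--Cantelli formulation---but the two arguments are equivalent in content. As a side remark, your computation in (a) actually produces the sharper constant $\tfrac14 Jrn\,(1-h/(Jr))^{2}$, and you are right that the paper's stated $\tfrac14 Jrn\,(1-(h/(Jr))^{2})$ follows from it.
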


\begin{proof}
(a) Straightforward.\\
(b) $\{d_{i}\} _{i=1}^{n}$ are i.i.d.\ with degree distribution $q_{\tau,\delta}$. Let
$s_{\tau,\delta,k} = \sum_{i=1}^{n} \mathbf{1}\left\{d_{i}\leq \delta + k\right\}$,
and note that
\begin{equation}
\mathbb{E}\left[s_{\tau,\delta,k}\right]
=n\left(1-\frac{\xi_{\tau}\left(\delta+k+1\right)}{\xi_{\tau}\left(\delta\right)}\right)
\end{equation}
with $\xi_{\tau}\left(a\right)=\sum_{i=a}^{\infty}i^{-\tau}$ for $a\geq0$. We claim that,
for $k$ sufficiently small, $s_{\tau,\delta,k}$ is concentrated around its mean. Indeed,
define $a_{\delta,k}=\sum_{i}\mathbf{1}\left\{ d_{i}=\delta+k\right\}$, $k\in\N_0$, and
note that for any i.i.d.\ sequence we have $a_{\delta,k}\stackrel{d}{=}\mathrm{Bin}\left(n,
p_{\delta,k}\right)$, where $p_{\delta,k}=\mathbb{P}\left[d_{i}=\delta+k\right]$. From
Hoeffding's inequality we get that
\begin{equation}
\mathbb{P}\left[\left|a_{\delta,k}-np_{\delta,k}\right|>n^{\frac{1}{2}+\frac{1}{6}}\right]
\leq\exp\left(-2n^{\frac{1}{3}}\right).
\end{equation}
Hence, for any $k=O(n^{1/6})$,
\begin{equation}
\begin{aligned}
\mathbb{P}\left[\left|s_{\tau,\delta,k}-\mathbb{E}
\left[s_{\tau,\delta,k}\right]\right|>n^{\frac{1}{2}+\frac{1}{3}}\right]
&\leq \mathbb{P}\left[\bigcup_{m=0}^{k}
\left|a_{\delta,m}-np_{\delta,m}\right|>n^{\frac{1}{2}+\frac{1}{6}}\right]
\leq n^{\frac{1}{6}}\exp\left(-2n^{\frac{1}{3}}\right).
\label{eq:bnd-s}
\end{aligned}
\end{equation}
Note that if $p_{\delta,k}=q_{\tau,\delta}(k)$, then $\mathbb{E}[a_{\delta,k}]
=n\,\frac{(\delta+k)^{-\tau}}{\xi_{\tau}(\delta)}$, and w.h.p.\ $\ell_{n}=n
\frac{\xi_{\tau-1}(\delta)}{\xi_{\tau}(\delta)}+o(n)$. Hence we define
\begin{equation}
\kappa = \min\left\{k\in\N\colon\,\frac{h}{J}\geq\left(\delta+k-1\right)
\left(1-\left(\frac{\xi_{\tau-1}\left(\delta+k\right)}
{\xi_{\tau-1}\left(\delta\right)}\right)\right)\right\} -1,
\label{eq:defkappa}
\end{equation}
which certainly satisfies $\kappa= o\left(n^{1/6}\right)$. From (\ref{eq:equivalentconditions})
and the monotonicity of (\ref{eq:functiong}) it follows that w.h.p.
\begin{equation}
\begin{aligned}
\bar{m}
&=n\Bigg[\left(1-\frac{\xi_{\tau}\left(\delta+\kappa\right)}{\xi_{\tau}\left(\delta\right)}\right)\\
&+\min\left\{ y\in\left[0,1\right]\colon\,\frac{h}{J}\geq\left(\delta+\kappa\right)
\left(1-\left(\frac{\xi_{\tau-1}\left(\delta+\kappa\right)}{\xi_{\tau-1}\left(\delta\right)}
+\frac{y\kappa\xi_{\tau}\left(\delta\right)}{\xi_{\tau-1}
\left(\delta\right)}\right)\right)\right\} \Bigg]+o\left(n\right)
\label{eq:mprimeval}
\end{aligned}
\end{equation}
and
\begin{equation}
\frac{\ell_{\bar{m}}}{\ell_{n}}
=\left(\frac{\xi_{\tau-1}\left(\delta+\kappa\right)}{\xi_{\tau-1}\left(\delta\right)}
+\frac{y\kappa\xi_{\tau}\left(\delta\right)}{\xi_{\tau-1}\left(\delta\right)}\right)
+o\left(1\right),
\label{eq:lm/ln}
\end{equation}
where $y$ is the taken as the argument of the minimum in (\ref{eq:mprimeval}).
Since we know $\ell_{n}$ up to $o(n)$, this also gives the value of $\ell_{\bar{m}}$.\\
(c) Note that $\tilde{m} = \sum_{i=0}^{\kappa} a_{\delta,i}$ where $\kappa$ is the
least integer such that
\begin{equation}
\delta a_{\delta,0} + (\delta + 1)a_{\delta,0} + \ldots
+ (\delta + \kappa)a_{\delta,\kappa} \geq \tfrac12\ell_n.
\end{equation}
By the concentration results given above, we see that w.h.p.
\begin{equation}
\kappa = \min\left\lbrace m\in\N\colon\,
\frac{\xi_{\tau -1}(\delta) +\xi_{\tau -1}(\delta + m+1) }{\xi_{\tau}(\delta)}
\geq \tfrac12 d_{\mathrm{ave}} \right\rbrace
\end{equation}
and
\begin{equation}
\tilde{m} = \frac{n}{\xi_{\tau}(\delta)} \sum_{i=0}^{\kappa} (\delta + i)^{\tau} \: + o(n).
\label{eq:tildem}
\end{equation}
\end{proof}


\subsection{Proof of Theorem \ref{thm:Ubound}}

\begin{proof}
Consider a sequence of matchings $\left\{\xi_{1},\xi_{2},\ldots,\xi_{M/2}\right\}$
constructed in a dynamical way as outlined above, where $M$ is some even integer.
Let $0\leq x\leq M$ be even, and define $z_{x,0}=x$ and $z_{x,t}=\sum_{i=1}^{x}
\sum_{m=1}^{x}\mathbf{1}\left\{\left(i,m\right)\in\xi_{x/2+t}\right\}$. Then
\begin{equation}
z_{x,t+1}=z_{x,t}-2\mathbf{1}\left\{ Y_{t+1}\right\},
\label{eq:ziplus1}
\end{equation}
where $Y_{t}$ is the event that $x+2t-1$ and $x+2t$ are both paired with terms in
$\left[x\right]$. Note that
\begin{equation}
\mathbb{P}\left[Y_{t+1}\left|\mathscr{G}{}_{x+t}\right.\right]=\frac{z_{x,t}}{x+2+1},
\label{eq:Ytplus1}
\end{equation}
where $\mathscr{G}_{x+t}$ is the $\sigma$-algebra generated by $\left\{\xi_{1},\ldots,
\xi_{x/2+t}\right\}$. Therefore
\begin{equation}
\mathbb{E}\left[z_{x,t+1}\left|\mathscr{G}{}_{x+t}\right.\right]
=  z_{x,t}-\frac{2z_{x,t}}{x+2t+1}
= z_{x,t}\left(\frac{x+2t-1}{x+2t+1}\right)
\end{equation}
and so
\begin{equation}
\mathbb{E}\left[z_{x,t+1}\right]
= z_{0}\prod_{j=0}^{t}\left(\frac{x+2j-1}{x+2j+1}\right)
= x\left(\frac{x-1}{x+2t+1}\right).
\label{eq:Expztplus1}
\end{equation}
To compute the second moment, observe that
\begin{equation}
z_{x,t+1}^{2}=z_{x,t}^{2}-4z_{x,t}\mathbf{1}\left\{ Y_{t+1}\right\}
+4\mathbf{1}\left\{ Y_{t+1}\right\},
\label{eq:zmoment2}
\end{equation}
and so
\begin{equation}
\mathbb{E}\left[z_{x,t+1}^{2}\left|\mathscr{G}{}_{x+t}\right.\right]
= z_{x,t}^{2}-\frac{4z_{x,t}^{2}}{x+2t+1}+\frac{4z_{x,t}}{x+2t+1}
= z_{x,t}^{2}\left(\frac{x+2t-3}{x+2t+1}\right)+\frac{4z_{x,t}}{x+2t+1}.
\end{equation}
 Then
\begin{equation}
\begin{aligned}
&\mathbb{E}\left[z_{x,t+1}^{2}\right]\\
& = \mathbb{E}\left[z_{x,t}^{2}\right]\left(\frac{x+2t-3}{x+2t+1}\right)
+\frac{4\mathbb{E}\left[z_{x,t}\right]}{x+2t+1}\\
&= x^{2}\prod_{i=0}^{t}\left(\frac{x+2i-3}{x+2i+1}\right)+\sum_{i=0}^{t}
\frac{4\mathbb{E}\left[z_{x,i}\right]}{x+2i+1}
\prod_{j=i+1}^{t}\left(\frac{x+2j-3}{x+2j+1}\right)\\
&= \frac{x^{2}\left(x-3\right)\left(x-1\right)}{\left(x+2t+1\right)\left(x+2t-1\right)}
+\sum_{i=0}^{t}\frac{4x\left(x-1\right)}{\left(x+2i+1\right)\left(x+2i-1\right)}
\frac{\left(x+2i-1\right)\left(x+2i+1\right)}{\left(x+2t+1\right)\left(x+2t-1\right)} \\
&= \frac{x^{2}\left(x-3\right)\left(x-1\right)}{\left(x+2t+1\right)\left(x+2t-1\right)}
+\frac{4x\left(x-1\right)\left(t+1\right)}{\left(x+2t+1\right)\left(x+2t-1\right)} \\
&= \frac{x\left(x-1\right)}{\left(x+2t+1\right)\left(x+2t-1\right)}\left(x\left(x-3\right)
+4\left(t+1\right)\right),
\label{eq:expzmoment2}
\end{aligned}
\end{equation}
while
\begin{equation}
\left(\mathbb{E}\left[z_{x,t+1}\right]\right)^{2}=x^{2}\left(\frac{x-1}{x+2t+1}\right)^{2}
\end{equation}
and so
\begin{equation}
\mathbb{E}\left[z_{x,t+1}^{2}\right]-\left(\mathbb{E}\left[z_{x,t+1}\right]\right)^{2}
=\frac{x\left(x-1\right)}{x+2t+1}\left(\frac{x\left(x-3\right)
+4\left(t+1\right)}{x+2t-1}-\frac{x\left(x-1\right)}{x+2t+1}\right).
\end{equation}
It follows that if we let $w_{x,t}=\frac{z_{x,t}}{x+2t}$, $t\geq1$, then
\begin{equation}
\begin{aligned}
&\mathbb{E}\left[w_{x,t+1}^{2}\right]-\left(\mathbb{E}\left[w_{x,t+1}\right]\right)^{2}\\
& =  \frac{\frac{x}{x+2\left(t+1\right)}\frac{\left(x-1\right)}{x+2\left(t+1\right)}}
{x+2t+1}\left(\frac{4\left(t+1\right)}{x+2t-1}+\frac{x-3}{1+2\frac{t}{x}-\frac{1}{x}}
-\frac{x-1}{1+2\frac{t}{x}+\frac{1}{x}}\right)\\
&= \frac{\frac{x}{x+2\left(t+1\right)}\frac{\left(x-1\right)}{x+2\left(t+1\right)}}
{x+2t+1}\left(\frac{4\left(t+1\right)}{x+2t-1}+\frac{-4\frac{t}{x}
-4\frac{1}{x}}{\left(1+2\frac{t}{x}-\frac{1}{x}\right)\left(1+2\frac{t}{x}
+\frac{1}{x}\right)}\right)\\
&= \frac{4\frac{x}{x+2\left(t+1\right)}\frac{\left(x-1\right)}{x+2\left(t+1\right)}}
{x+2t+1}\frac{t+1}{x+2t-1}\left(1-\frac{1}{1+2\frac{t}{x}+\frac{1}{x}}\right).
\label{eq:wvar}
\end{aligned}
\end{equation}

Observe also that for
\begin{equation}
\bar{z}_{x,t} = \sum_{i=1}^{x}\sum_{m=x+1}^{M}
\mathbf{1}\left\{ \left(i,m\right)\in\xi_{x/2+t}\right\} =x-z_{x,t}
\label{eq:zbar}
\end{equation}
and
\begin{equation}
\bar{w}_{x,t} = \frac{\bar{z}_{x,t}}{x+2t}=\frac{x}{x+2t}-w_{x,t}
\end{equation}
the same variance calculations follow, so that $\mathbb{E}\left[\bar{w}_{x,t+1}^{2}\right]
-\left(\mathbb{E}\left[\bar{w}_{x,t+1}\right]\right)^{2}$ is also given by (\ref{eq:wvar}). For
$\alpha\in\left(0,1\right)$ and $1\leq i\leq k\left(\alpha\right)$ with $k\left(\alpha\right)
=M/\left\lfloor M^{\alpha}\right\rfloor $, let $x_{i}=i\left\lfloor M^{\alpha}\right\rfloor $ and
note that
\begin{equation}
\begin{aligned}
&\sum_{i=1}^{k\left(\alpha\right)} \left(\mathbb{E}\left[\bar{w}_{x,t+1}^{2}\right]
-\left(\mathbb{E}\left[\bar{w}_{x,t+1}\right]\right)^{2}\right)\\
&=\frac{1}{2}M^{-2}\left(M-1\right)^{-1}\left(M-3\right)^{-1}\sum_{i=1}^{k\left(\alpha\right)}
x_{i}\left(x_{i}-1\right)\left(M-x_{i}\right)\left(1-\frac{x_{i}}{M-1}\right)
=O\left(M^{-\alpha}\right).
\label{eq:2ndmomentbndforpath}
\end{aligned}
\end{equation}
From Markov's inequality we have that
\begin{equation}
\begin{aligned}
&\mathbb{P}\left[\exists\,i\mbox{ such that }\frac{\left|\bar{z}_{x_{i},\left(M-x_{i}\right)/2}
-\mathbb{E}\left[\bar{z}_{x_{i},\left(M-x_{i}\right)/2}\right]\right|}{M}
>M^{-\frac{\alpha}{3}}\right]\\
&= \mathbb{P}\left[\exists\, i\mbox{ such that }\frac{\left|\bar{z}_{x_{i},\left(M-x_{i}\right)/2}
-\mathbb{E}\left[\bar{z}_{x_{i},\left(M-x_{i}\right)/2}\right]\right|^{2}}{M^{2}}
>M^{-\frac{2\alpha}{3}}\right]\\
&= \mathbb{P}\left[\exists\, i\mbox{ such that } \left|\bar{w}_{x_{i},\left(M-x_{i}\right)/2}
-\mathbb{E}\left[\bar{w}_{x_{i},\left(M-x_{i}\right)/2}\right]\right|^{2}
>M^{-2\alpha/3}\right]= O\left(M^{-\alpha/3}\right)
\end{aligned}
\end{equation}
and thus we have that w.h.p.\ for every $1\leq i\leq k\left(\alpha\right)$,
\begin{equation}
\left|\bar{z}_{x_{i},\left(M-x_{i}\right)/2}-\mathbb{E}\left[\bar{z}_{x_{i},
\left(M-x_{i}\right)/2}\right]\right|=O\left(M^{1-\alpha/3}\right).
\label{eq:bndziconcentration}
\end{equation}
Now suppose that $x_{i}\leq x\leq x_{i+1}$. Then, clearly, $\bar{z}_{x_{i},
\left(M-x_{i}\right)/2}-M^{\alpha}\leq\bar{z}_{x,\left(M-x\right)/2}\leq
\bar{z}_{x_{i},\left(M-x_{i}\right)/2}+M^{\alpha}$, and via (\ref{eq:Expztplus1})
and (\ref{eq:zbar}) we conclude that w.h.p.\ for every $1\leq x\leq M$ we have
\begin{equation}
\label{eq:boundzbarvariance}
\begin{aligned}
&\left|\bar{z}_{x,\left(M-x\right)/2}-\mathbb{E}\left[\bar{z}_{x,\left(M-x\right)/2}\right]\right|\\
& \leq\left|\bar{z}_{x,\left(M-x\right)/2}-\bar{z}_{x_{i},\left(M-x_{i}\right)/2}\right|
+\left|\bar{z}_{x_{i},\left(M-x_{i}\right)/2}
-\mathbb{E}\left[\bar{z}_{x_{i},\left(M-x_{i}\right)/2}\right]\right|\\
&\qquad +\left|\mathbb{E}\left[\bar{z}_{x_{i},\left(M-x_{i}\right)/2}\right]
-\mathbb{E}\left[\bar{z}_{x,\left(M-x\right)/2}\right]\right|\\
& =\left|\bar{z}_{x,\left(M-x\right)/2}-\bar{z}_{x_{i},\left(M-x_{i}\right)/2}\right|
+\left|\bar{z}_{x_{i},\left(M-x_{i}\right)/2}-x_{i}\left(\frac{M-x_{i}}{M-1}\right)\right|\\
&\qquad +\left|x_{i}\left(\frac{M-x_{i}}{M-1}\right)-x\left(\frac{M-x}{M-1}\right)\right|\\
&\leq M^{\alpha}+O\left(M^{1-\alpha/3}\right)+M^{\alpha}.
\end{aligned}
\end{equation}
Now let $\gamma_{s}$ be any configuration on the path $\gamma:\boxminus\rightarrow\boxplus$
defined above. Then w.h.p.
\begin{equation}
\begin{aligned}
\mathcal{H}\left(\gamma_{s}\right)-\mathcal{H}\left(\boxminus\right)
&= J\left|E\left(\gamma_{s},\overline{\gamma_{s}}\right)\right|-hs\\
&= J\bar{z}_{\ell_{\gamma_{s}},\left(\ell_{n}-\ell_{\gamma_{s}}\right)/2}-hs
=J\ell_{\gamma_{s}}\left(1-\frac{\ell_{\gamma_{s}}}{\ell_{n}}\right)-hs+O\left(\ell_{n}^{3/4}\right),
\end{aligned}
\end{equation}
where the last line follows from \eqref{eq:boundzbarvariance} with $x=\ell_{m}$, $M=\ell_{n}$
and $\alpha=\frac{3}{4}$, and uses the fact that $\mathbb{E}\left[\bar{z}_{x,t}\right]
=x\left(1-\frac{x-1}{x+2t+1}\right)$. By definition, this quantity is maximised when
$\ell_{\gamma_{s}}$ is replaced by $\ell_{\bar{m}}$, from which the statement of the
theorem follows.
\end{proof}


\subsection{Proof of Theorem \ref{thm:Lbound}}

\begin{proof}
Setting $x=\tfrac12$ in~\eqref{sd-expbound}, we get that the probability of there being
any configuration of total degree $\ell_n/2$ having a boundary size $y\ell_n$ is bounded
from above by
\begin{equation}
\exp\left[\ell_{n}\log\left(\tfrac12\eta\left(\tfrac12\right)
\left(\tfrac12-y\right)^{-\left(\tfrac12-y\right)}y^{-y}\right)\right].
\label{eq:expbound}
\end{equation}
By the law of large numbers, w.h.p.\ we have that $n=\ell_{n}/d_{\mathrm{ave}}+
o\left(n\right)$. Combining this with $\left|\left\{ U\subseteq V\colon\,\ell_U
=\ell_{n}x\right\} \right|\leq2^n$ we get that $\eta\left(\tfrac12\right) \leq 2^{1/d_{\mathrm{ave}}}$.
It follows that if $y< I_{d_{\mathrm{ave}}}\left(\tfrac12\right)$, then (\ref{eq:expbound})
decays exponentially. Hence, all configurations $\sigma$ with $\ell_\sigma=\ell_n/2$ have,
w.h.p., an energy at least
\begin{equation}
\cH(\sigma) \geq J I_{d_{\mathrm{ave}}}\left(\tfrac12\right) \ell_n - h |\sigma| + \cH(\boxminus),
\end{equation}
and the lower bound on $\Gamma^\star_n$ in~\eqref{eq:lboundGamma} follows.
\end{proof}


\section{Tail properties of the dynamically constructed $\CM_n$}
\label{tailprop}

In this section we explore some properties of the dynamical construction of $\CM_n$
introduced in Section~\ref{CMdynamic}.


\subsection{Trivial tail $\sigma$-algebra}

Let $V_{n}=\left(v_{1},\ldots,v_{n}\right)$ be the vertices with corresponding
degree sequence
\begin{equation}
\vec{d}_{n}=\left(d_{1},\ldots,d_{n}\right),
\end{equation}
and let $G_{n}=\left(V_{n},E_{n}\right)$ and $G_{n}\prime=\left(V_{n},E_{n}\prime
\right)$ be two independent Configuration Models with the same degree sequence
$\vec{d}_{n}$. We will extend $G_{n}$ and $G_{n}\prime$ to larger graphs, $G_{n+t}
=\left(V_{n+t},E_{n+t}\right)$ and $G_{n+t}\prime=\left(V_{n+t},E_{n+t}\prime\right)$,
respectively, with degree sequence
\begin{equation}
\vec{d}_{n}=\left(d_{1},\ldots,d_{n},d_{n+1},\ldots d_{n+t}\right),
\end{equation}
by utilising a pairing scheme similar to the one introduced in Section \ref{CMdynamic}:
\begin{itemize}
\item[$\bullet$]
If $\xi_{m}$ is a uniform random matching of the integers $\left\{ 1,\ldots,2m\right\}$,
denoted by $\xi_{m}=\left\{ \left(x_{1},x_{2}\right),\ldots,\left(x_{2m-1},x_{2m}\right)
\right\} $, then take $u_{1}$ to be uniform on $\left\{ 1,\ldots,2m\right\} $ and
$u_{2}$ to be uniform on $\left\{ 1,\ldots,2m,2m+1\right\} $. If $u_{2}=2m+1$, then
set $\xi_{m+1}=\xi_{m}\cup\left\{ \left(2m+1,2m+2\right)\right\} $. Otherwise add to
$\xi_{m}$ the pairs $\left(2m+1,u_{1}\right)$ and $\left(2m+2,u_{2}\right)$ when
$u_{1}\neq u_{2}$, and when $u_{1}=u_{2}$, only add to $\xi_{m}$ the pair
$\left(2m+2,u_{2}\right)$. In either case, if there are two remaining terms that are
unpaired, then pair them to each other and add this pair to $\xi_{m}$. Needless
to say, we also remove from $\xi_{m}$ old pairs that were undone by the introduction
of $2m+1$ and $2m+2$. Again, this construction leads to $\xi_{m+1}$, a uniform
matching of the points $\left\{1,\ldots,2m+2\right\}$.
\end{itemize}
Now construct the coupled graphs $\left(G_{n+t},G_{n+t}\prime\right)$ by starting
with $\left(G_{n},G_{n}\prime\right)$ and using the same uniform choice
\begin{equation}
\left\{u_{i}\right\} _{i=1}^{|\vec{d}_{n+t}|-|\vec{d}_{n}|}
\end{equation}
to determine new edges in both graphs. Note that, under this scheme, every term
(half-edge) $s>|\vec{d}_{n}|$ is paired with the same term in $G_{n+t}$
as in $G_{n+t}\prime$. In other words, for all $1\leq j\leq|\vec{d}_{n+t}|$
we have $\left(s,j\right)\in E_{n+t}$ if and only if $\left(s,j\right)\in E_{n+t}\prime$.
For $s\leq|\vec{d}_{n}|$ and $1\leq j\leq|\vec{d}_{n+t}|$, we have
\begin{equation}
\begin{aligned}
&\mathbb{P}\left[\mathbf{1}_{\left\{ \left(s,j\right)\in E_{n+t}\right\} }
\neq \mathbf{1}_{\left\{ \left(s,j\right)\in E_{n+t}\prime\right\} }\right]\\
&\leq  \mathbb{P}\left[\bigcap_{i=1}^{|\vec{d}_{n+t}|
-|\vec{d}_{n}|}\left\{ u_{i}\neq s\right\} \right]
= \prod_{i=|\vec{d}_{n}|}^{|\vec{d}_{n+t}|}\left(1-\frac{1}{i-1}\right)
=\frac{|\vec{d}_{n}|-2}{|\vec{d}_{n+t}|-1}.
\end{aligned}
\end{equation}
Hence
\begin{equation}
\mathbb{P}\left[\bigcup_{s=1}^{\left|\vec{d}_{n}\right|}
\left\{ \mathbf{1}_{\left\{ \left(s,j\right)\in E_{n+t}\right\} }
\neq \mathbf{1}_{\left\{ \left(s,j\right)\in E_{n+t}\prime\right\} }\right\} \right]
\leq\frac{|\vec{d}_{n}|(|\vec{d}_{n}|-2)}
{|\vec{d}_{n+t}|-1}.
\end{equation}
Thus, we conclude that $\mathbb{P}\left[G_{n+t}\neq G_{n+t}\prime\right] =
O\left(\frac{1}{t}\right)$.

We can now make the following standard argument to show that the process
above has a trivial tail-sigma-algebra. Let $\mathscr{F}_{t}=\sigma\left(\xi_{1},
\ldots.\xi_{t}\right)$ and $\mathscr{F}_{t}^{+}=\sigma\left(\xi_{t+1},\ldots\right)$.
The tail sigma-algebra is given by $\mathscr{T}=\cap_{n\in\mathbb{N}}
\mathscr{F}_{t}^{+}$. For any $A\in\mathscr{T}$, there is a sequence of events
$A_{1},A_{2},\ldots$ such that
\begin{equation}
\lim_{t\to\infty}\mathbb{P}\left[A_{t}\triangle A\right]=0,
\end{equation}
and hence also
\begin{equation}
\lim_{t\to\infty}\mathbb{P}\left[A_{t}\cap A\right] =\mathbb{P}\left[A\right],
\qquad \lim_{t\to\infty}\mathbb{P}\left[A_{t}\right]=\mathbb{P}\left[A\right].
\end{equation}
But, since $A\in\mathscr{F}_{t}^{+}$ for all $t$, it follows that $\mathbb{P}\left[A_{t}
\cap A\right]=\mathbb{P}\left[A_{t}\right]\mathbb{P}\left[A\right]$, and hence
$\mathbb{P}\left[A\right]=\mathbb{P}\left[A\right]^{2}$. This shows that $\mathscr{T}$
is a trivial sigma-algebra. Therefore, given $\left\{ d_{i}\right\} _{i\in\N}$ (but also
by the law of large numbers for i.i.d.\ sequences),
\begin{equation}
\limsup_{n\to\infty}\frac{\Gamma_{n}^{\star}}{n}= \gamma^*_+,
\qquad \liminf_{n\to\infty}\frac{\Gamma_{n}^{\star}}{n}= \gamma^*_-,
\end{equation}
for some $\gamma^*_+,\gamma^*_- \in\R$ with $\gamma^*_+ \geq \gamma^*_-$.


\subsection{Oscillation bounds}

It is possible to obtain bounds on the possible oscillations of $n\mapsto
\Gamma_{n}^{\star}/n$.

\begin{lemma}
\label{lem: gammadifference}
Let $G=(V,E)$ and $\tilde{G}=(\tilde{V},\tilde{E})$ be two connected graphs.
Suppose that $\left|E\nabla\tilde{E}\right|\leq k$ under some labelling of the
vertices in $V$ and $\tilde{V}$ (i.e., a one-to-one map from $V$ to $\tilde{V}$).
Then
\begin{equation}
|\Gamma^{\star}-\tilde{\Gamma}^{\star}| \leq Jk+h\big| |\tilde{V}|-|V| \big|.
\end{equation}
\end{lemma}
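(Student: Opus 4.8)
The plan is to prove the estimate in two reduction steps that decouple the two sources of discrepancy: the difference in the number of vertices (which affects only the magnetic-field part of the Hamiltonian) and the difference in the edge sets (which affects only the pair-potential part). Throughout I will use that $\Gamma^\star=\Phi(\boxminus,\boxplus)-\cH(\boxminus)$, that $\Phi(\boxminus,\boxplus)$ is an infimum over paths consisting of single spin flips, and that such paths do not depend on the graph at all — only the value of $\cH$ along them does. Note also that for this quantity connectedness of the graph plays no role, so we are free to pass through disconnected intermediate graphs.

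\emph{Step 1 (equalising the vertex sets).} Assume w.l.o.g.\ that $|\tilde V|\geq|V|$, fix the given injection $V\hookrightarrow\tilde V$, put $W=\tilde V\setminus V$ and $p=|W|=|\tilde V|-|V|$, and let $G'=(\tilde V,E)$ be the graph obtained from $G$ by adding the $p$ vertices of $W$ as isolated vertices. For any configuration $\xi$ on $\tilde V$ one has $\cH_{G'}(\xi)=\cH_G(\xi|_V)-\tfrac h2\sum_{w\in W}\xi(w)$, so the $W$-contribution lies in $[-\tfrac h2 p,\tfrac h2 p]$. For the upper bound $\Gamma^\star_{G'}\leq\Gamma^\star_G$ I would use the path that first performs an optimal $\boxminus\to\boxplus$ crossover inside $V$ while holding the $W$-spins at $-1$ (along which $\cH_{G'}=\cH_G(\cdot|_V)+\tfrac h2 p$), and then flips the $W$-spins up one at a time (each such flip lowers $\cH_{G'}$ by $h$, so the maximum is not exceeded); subtracting $\cH_{G'}(\boxminus)=\cH_G(\boxminus)+\tfrac h2 p$, the two $\tfrac h2 p$ terms cancel. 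For the matching lower bound $\Gamma^\star_{G'}\geq\Gamma^\star_G-hp$ I would take an arbitrary $\boxminus\to\boxplus$ path in the $\tilde V$-configuration space, restrict it to $V$ to obtain a walk from $\boxminus$ to $\boxplus$ there, pick a configuration $\xi$ on the path with $\cH_G(\xi|_V)\geq\Phi_G(\boxminus,\boxplus)$, and bound $\cH_{G'}(\xi)\geq\Phi_G(\boxminus,\boxplus)-\tfrac h2 p$. Combining, $|\Gamma^\star_{G'}-\Gamma^\star_G|\leq hp=h\,\big||\tilde V|-|V|\big|$.

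\emph{Step 2 (equal vertex sets, differing edges).} Now $G'$ and $\tilde G$ share the vertex set $\tilde V$ and $|E\nabla\tilde E|\leq k$. For every configuration $\xi$, each edge (counted with multiplicity) in $E\nabla\tilde E$ contributes $\pm\tfrac J2$ to $\cH_{G'}(\xi)-\cH_{\tilde G}(\xi)$, whence $|\cH_{G'}(\xi)-\cH_{\tilde G}(\xi)|\leq\tfrac J2|E\nabla\tilde E|\leq\tfrac{Jk}2$. Since the competing paths in the definition of $\Phi(\boxminus,\boxplus)$ are the same for $G'$ and $\tilde G$, taking maxima along a path and then the infimum over paths gives $|\Phi_{G'}(\boxminus,\boxplus)-\Phi_{\tilde G}(\boxminus,\boxplus)|\leq\tfrac{Jk}2$; and directly $|\cH_{G'}(\boxminus)-\cH_{\tilde G}(\boxminus)|=\tfrac J2\big||E|-|\tilde E|\big|\leq\tfrac{Jk}2$. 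Hence $|\Gamma^\star_{G'}-\tilde\Gamma^\star|\leq Jk$, and the triangle inequality together with Step 1 yields $|\Gamma^\star-\tilde\Gamma^\star|\leq Jk+h\big||\tilde V|-|V|\big|$.

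The computation is essentially routine once this decoupling is in place; the only point requiring genuine care is the lower bound in Step 1, i.e.\ controlling $\Phi_{G'}(\boxminus,\boxplus)$ from below by a restriction-to-$V$ argument and checking that the resulting walk in the smaller configuration space really does witness the communication height $\Phi_G(\boxminus,\boxplus)$. A secondary subtlety worth flagging is that the intermediate graph $G'$ is disconnected, so one should remark that $\Gamma^\star$ as defined via $\Phi(\boxminus,\boxplus)$ is meaningful — and the above manipulations legitimate — regardless of connectedness.
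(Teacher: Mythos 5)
Your proof is correct. It shares the paper's core mechanism — transport a near-optimal path from one graph to the other and control the pointwise change in energy — but you organize it differently: the paper lifts an optimal $\gamma:\boxminus\to\boxplus$ for $G$ directly to $\tilde G$ in a single pass (flip the same vertices, then flip the surplus vertices of $\tilde V$ at the end) and bounds both the $J$- and $h$-discrepancies simultaneously along that one lifted path, arguing the reverse inequality ``similarly''; you instead factor the comparison through an intermediate graph $G'=(\tilde V,E)$ obtained by adjoining isolated vertices, so that Step~1 isolates the magnetic-field effect (giving $|\Gamma^\star_{G'}-\Gamma^\star_G|\leq h\,||\tilde V|-|V||$ via the clean uniform bound $|\cH_{G'}(\xi)-\cH_G(\xi|_V)|\leq\tfrac h2|W|$) and Step~2 isolates the pair-potential effect (the uniform bound $|\cH_{G'}(\xi)-\cH_{\tilde G}(\xi)|\leq\tfrac J2|E\nabla\tilde E|$ on a common configuration space). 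What your decomposition buys is that each step is a straightforward ``energy landscapes within $\varepsilon$ of each other have communication heights within $2\varepsilon$'' argument on a fixed state space, and the two-sided estimate falls out symmetrically rather than by repeating the construction; the cost is the extra intermediate object and the need to note, as you do, that $\Phi(\boxminus,\boxplus)$ makes sense for the disconnected $G'$. One small thing worth making explicit in Step~1's lower bound: after restricting the $\tilde V$-path to $V$ you get a walk with possible repeats (when a $W$-vertex was flipped); deleting repeats yields a genuine single-spin-flip path $\boxminus_V\to\boxplus_V$, and it is this path whose maximum is $\geq\Phi_G(\boxminus,\boxplus)$ by definition — you flag this as the delicate point and the conclusion is indeed sound.
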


\begin{proof}
W.l.o.g.\ assume that $|\tilde{V}|\geq|V\|$. Given the labelling of the vertices
that satisfies the above condition, let $\gamma\colon\,\boxminus\to\boxplus$,
denoted by $\gamma=\left(\gamma_{1},\ldots,\gamma_{m}\right)$, be an
optimal path for the Glauber dynamics on $G$. Now let $\tilde{\gamma}\colon\,
\tilde{\boxminus}\to\tilde{\boxplus}$ be the Glauber path of configurations on
$\tilde{G}$, denoted by $\tilde{\gamma}=(\tilde{\gamma}_{1},\ldots,\tilde{\gamma}_{m},
\tilde{\gamma}_{m+1},\ldots,\tilde{\gamma}_{m+|\tilde{V}|-|V|})$, and defined by
the following rule: whichever vertex $v\in V$ is flipped at step $i$ in the path
$\gamma$, flip the corresponding vertex $\tilde{v}\in\tilde{V}$ also at step $i$
in $\tilde{\gamma}$. For steps $m+1,\ldots,m+|\tilde{V}|-|V|$, flip the remaining
$-1$ valued vertices in any arbitrary order. Then it follows that, for $1\leq i\leq m$,
\begin{equation}
\tilde{\mathcal{H}}\left(\tilde{\gamma}_{i}\right)
-\tilde{\mathcal{H}}\left(\tilde{\boxminus}\right)
=J|\tilde{E}\left(\tilde{\gamma}_{i},\overline{\tilde{\gamma}_{i}}\right)|
-h\left|\tilde{\gamma}_{i}\right|
\leq J\left(|E\left(\gamma_{i},\overline{\gamma_{i}}\right)|+k\right)
-h\left|\tilde{\gamma}_{i}\right|.
\end{equation}
Similarly, for $m\leq i\leq m+|\tilde{V}|-|V|$, we have
\begin{equation}
\tilde{\mathcal{H}}\left(\tilde{\gamma}_{i}\right)
-\tilde{\mathcal{H}}\left(\tilde{\boxminus}\right)
\leq Jk-h\left(\left|\gamma_{i}\right|-\left|\boxplus\right|\right).
\end{equation}
It follows that $\tilde{\Gamma}^{\star}\leq\Gamma^{\star}+Jk-h(|\tilde{V}|-|V|)$.
A similar argument gives $\Gamma^{\star}\leq\tilde{\Gamma}^{\star}+Jk
+h||\tilde{V}|-|V||$.
\end{proof}

Now let $G=\left(V,E\right)$ and $\tilde{G}=(\tilde{V},\tilde{E})$ be two
Configuration Models and suppose w.l.o.g.\ that the total degree of the
vertices in $V$ is $\ell_{V}$ and the total degree of vertices in $\tilde{V}$
is $\ell_{\tilde{V}}\geq\ell_{V}$. Let $G_{t}$ and $\tilde{G}_{t}$ be the
extension of each these two graphs, obtained by adding vertices
$\left\{ v_{1},\ldots,v_{t}\right\} $ and $\left\{ \tilde{v}_{1},\ldots,\tilde{v}_{t}\right\}$,
both with the same degree sequence $\left\{d_{1},\ldots,d_{t}\right\}$.
We will couple the construction leading to the two graphs $G_{t}$ and
$\tilde{G}_{t}$ in the following manner: for $1\leq i\leq\sum_{k=1}^{t}d_{k}$,
choose $u_{i}$ uniformly as described above, and pair $i$ with $u_{i}$
in $G_{t}$. Let $\delta_{i}=\left(\ell_{\tilde{V}}-\ell_{V}\right)/\left[\left(\ell_{\tilde{V}}
+i-1\right)\left(\ell_{V}+i-1\right)\right]$ and set $\tilde{u}_{i}=u_{i}$ with probability
$1-\delta_{i}$, and with probability $\delta_{i}$ independently and uniformly
pick one of the remaining $\left(\ell_{\tilde{V}}-\ell_{V}\right)$ points. Then
\begin{equation}
\mathbb{E}[|E_{t}\nabla\tilde{E_{t}}|]
\leq |E\nabla\tilde{E}|+\sum_{i}\delta_{i} \leq |E\nabla\tilde{E}|
+2\left(\ell_{\tilde{V}}-\ell_{V}\right).
\end{equation}
Hence, from Markov's inequality and from Lemma~\ref{lem: gammadifference},
it follows that, w.h.p. and for any function $f(t)$ such that $\lim_{t\to\infty}
f(t)=\infty$,
\begin{equation}
\left|\tilde{\Gamma^{\star}}_{t}-\Gamma^{\star}_{t}\right|
\leq J\left(|E\nabla\tilde{E}|+2(\ell_{\tilde{V}}-\ell_{V})+f(t)\right)
-h\left(\big||\tilde{V}|-|V|\big|\right).
\end{equation}
Hence, by this pairing scheme, we have that $\tilde{\Gamma^{\star}_{t}}/\Gamma^{\star}_{t}
\to 1$ as $t\to \infty$.


\section*{Appendix A}

Note that any configuration $\sigma$ chosen uniformly from all configurations of
size $|\sigma|$ satisfies ($\mathbb{E}$ denotes expectation w.r.t.\ bond percolation)
\begin{equation}
\mathbb{E}\left(\left|E\left(\sigma,\overline{\sigma}\right)\right|\right)
= \mu_{\left|\sigma\right|} \quad \mbox{ with } \quad
\mu_{\left|\sigma\right|} = p\left|\sigma\right|\left(n-\left|\sigma\right|\right).
\end{equation}
Using Chernoff's inequality and a union bound, we can show that if $|\sigma|=\Theta(n)$,
then
\begin{equation}
\mathcal{H}\left(\sigma\right)-\mathcal{H}\left(\boxminus\right)
= J\mu_{\left|\sigma\right|}[1\pm o(1)]-h\left|\sigma\right|.
\end{equation}
Furthermore, any $\sigma$ of size $|\sigma|\leq[1-o(1)]\frac{n}{2}$ has (modulo small
fluctuations) a downhill path to $\boxminus$ (e.g. by flipping $+1$ spins in any arbitrary
order), while every $|\sigma|\geq[1+o(1)]\frac{n}{2}$ has a downhill path to $\boxplus$
(e.g.\ by flipping $-1$ spins in any arbitrary order). This proves hypothesis (H), and the
claim in \eqref{eq:er1}.



\end{document}